\newcommand{\HC}{{\textbf{H}_{\mathbb{C}}^2}}
\newcommand{\PU}{\textrm{PU}}
\newcommand{\SU}{\textrm{SU}}
\newcommand{\tr}{\textrm{tr}}
\renewcommand{\Re}{\textrm{Re}}
\renewcommand{\H}{\textrm{H}}
\newcommand{\C}{\mathbb{C}}
\newcommand{\n}{\textbf{n}}
\renewcommand{\a}{\textbf{a}}
\renewcommand{\b}{\textbf{b}}
\newtheorem{thm}{Theorem}[section]
\newtheorem{prop}[thm]{Proposition}
\newtheorem{rem}[thm]{Remark}
\begin{document}
\title{ $\mathbb{C}$-Fuchsian subgroups of some non-arithmetic lattices}
\author{Li-Jie Sun}
%\date\today
\thanks{E-mail: ljsun@yamaguchi-u.ac.jp}
\thanks{2020 Mathematics Subject Classification: Primary 22E40; Secondary 20F05, 32M15, 51M10.}
\thanks{Keywords: $\C$-Fuchsian subgroup, Complex hyperbolic triangle group, Poincar\'e's polygon theorem.}

\address {Department of applied science, Yamaguchi University
2-16-1 Tokiwadai, Ube 755-8611,
Japan}
\email{ljsun@yamaguchi-u.ac.jp}

\maketitle
\begin{abstract}

We give a general procedure to analyze the structure for certain $\mathbb{C}$-Fuchsian subgroups of some non-arithmetic lattices. We also show their presentations and describe their fundamental domains which lie in a complex geodesic, a set homeomorphic to the unit disk.

%In \cite{DPP2}, the authors stated a process to produce fundamental domains for complex hyperbolic triangle groups which are non-arithmetic lattice in $\PU(2, 1)$. Starting from that point of view, we find some Fuchsian subgroups of complex hyperbolic triangle groups and draw the corresponding fundamental domains.
\end{abstract}
\section{Introduction}
Suppose that ${\rm H}$ is a Hermitian form of signature $(2, 1)$ on $\C^3.$ Then the projective unitary Lie group $\PU(2, 1)$ of 
${\rm H}$ contains two conjugacy classes of connected Lie subgroups, each of which is locally isomorphic to 
$\rm{PSL}(2, \mathbb{R}).$ The subgroups in one class are conjugate to ${\rm PSU}(1, 1)$, and preserve a complex line for the projective action of $\PU(2, 1)$ on the projective plane ${\bf P}_\mathbb{C}^2$. The subgroups in the other class are conjugate to 
${\rm PO}(2, 1)$, and preserve a totally real Lagrangian plane. If $\Gamma$ is a discrete subgroup of $\PU(2, 1),$ the intersections of 
$\Gamma$ with the connected Lie subgroups locally isomorphic to ${\rm PSL}(2, \mathbb{R})$ are its Fuchsian subgroups. The Fuchsian subgroups fixing a complex line are called $\mathbb{C}$-Fuchsian subgroups. See Section \ref{Sec:pre} for more details.

%Some of  the groups are not contained (up to commensurability) in the list of Deligne-Mostow lattices and Thurston lattices. By the construction of an explicit fundamental domain for each group , they prove that the group is a lattice. There are general methods to produce fundamental domains in $\HC$, such as Dirichlet domains, however the combinatorial intersections of two bisectors are quite complicated, see \cite{Gol}. Richard Schwartz constructed the fundamental domains in \cite{Sch4} and the general procedure of the combinatorial constructions of the domain is applied to a wide class of complex hyperbolic triangle groups. From the appendix in \cite{DPP2}, we could see the fundamental domains of 9 kinds of complex hyperbolic triangle groups. The $\mathbb{C-}$Fuchsian subgroup in the lattice $\Gamma$ is a subgroup which fix a complex geodesic $L$ (see the definition in Section 2.4).

Fuchsian subgroups have remarkable geometrical properties and they are interesting on their own, see for instance \cite{PaPl,PPau}. 
% The construction of fundamental domains is much more complicated since there are no totally geodesic real hypersurfaces,  see section \ref{Subsec:tga}. The complex hyperbolic triangle groups as an important type of subgroups and candidates of complex hyperbolic lattices in $\PU(2, 1)$ has  been studied by many mathematicians.  
They also play an important role in complex hyperbolic space. Deraux \cite{Der} proved that the discrete deformation of some $\mathbb{R}$-Fuchsian triangle group in $\PU(2, 1)$ is a cocompact arithmetic lattice (a lattice in $\PU(2, 1)$ is a discrete group with finite covolume).
There also have been significant developments on $\C$-Fuchsian subgroups. To this direction,
let $S$ be a hyperbolic surface. Gusevskii-Parker \cite{GuPa} studied the deformation space of a $\C$-Fuchsian representation $\pi_1(S)\to$ {\bf{Isom}}$(\HC)$ by formulating and proving Poincar\'e's polyhedron theorem for one special class of polyhedra in complex hyperbolic plane.
Furthermore, Stover \cite{Sto} proved that if $\Gamma$ is a complex hyperbolic lattice containing a complex reflection, then $\Gamma$ contains a $\C$-Fuchsian subgroup stabilising the complex geodesic fixed by the reflection. However, it is usually difficult to get an explicit description of such $\C$-Fuchsian subgroups from the complex hyperbolic lattice.
%prove directly that a given group, for instance generated by a number of generators, acts discretely on complex hyperbolic plane.
In the present paper, we wish to identify the structures of the $\mathbb{C}$-Fuchsian subgroups (arising as stabilisers of the complex geodesics fixed by reflections) in complex hyperbolic lattices, mainly by applying Poincar\'e's polygon theorem. In this way, we also illustrate their actions on the fixing complex geodesic $L.$
In other words, we get a more explicit version of Stover's result. The study on the structure of the stabilisers of the complex lines is useful in the study of complex hyperbolic lattices using algebraic geometry (see for example \cite{Dera}), and is useful for considering lattices from the point of view of hybrids (see \cite{Jose}).
% see {\textcolor{red}{Theorem \ref{thm:lat}. }maybe cancelled}

In \cite{DPP, DPP2}, Deraux, Parker and Paupert considered a family of groups which produce all currently known examples of non-arithmetic lattices in $\PU(2, 1)$. Each of such groups is a complex hyperbolic triangle group generated by three complex reflections of the same order $p$ ($p \geq 2$). They prove the discreteness by constructing an explicit fundamental domain for each group, and show that the geometric realisation gives an embedding of the combinatorial fundamental domain into the topological closure of complex hyperbolic plane $\overline{{\bf {H}}_\mathbb{C}^2}.$ 
 In particular, the authors listed the side (codimension-1) representatives of the fundamental domains for the sporadic triangle groups (see Section \ref{Subsec:spo}) and Thompson triangle groups (see Section \ref{Subsec:Tho}), also gave the natural representation for each group.  
 
In this paper, our goal is to identify the $\mathbb{C}$-Fuchsian subgroups of the sporadic triangle groups (subgroups of equilateral triangle groups) and Thompson triangle groups (subgroups of non-equilateral triangle groups), which appeared in \cite{DPP2}.
We consider the equilateral triangle groups which are generated by three complex reflections $R_1,\, R_2,\, R_3$ with the property that there exists a complex hyperbolic isometry $J$ of order 3 such that $R_{j+1}=JR_{j}J^{-1}$ (the indices taken by mod 3).  The equilateral triangle groups then can be parameterised by the order $p$ of generators and the complex parameter
\[
\tau=\tr(R_1J).
\]
We denote the sporadic triangle groups by $\mathcal{S}(p, \tau).$ See details in Section \ref{Subsec:spo}.

Our main theorem is the following:
 \begin{thm}\label{thm:main}
 Let $R_1, R_2, R_3$ be three complex reflections of order $p$ in ${\rm SU}(2, 1)$ so that $R_i$ fixes a complex geodesic $L_i,$ $i=1, 2, 3.$ Suppose that $R_1, R_2, R_3$ is the generating set for $\mathcal{S}(p, \tau)$. Then there exist $\mathbb{C}$-Fuchsian subgroups  fixing complex geodesic $L_1$ which have the following structure according to $(\tau, p):$
\begin{enumerate}
\item[{(i)}] $\tau=-1+i\sqrt{2}, p=3, 4, 6:$ 
$$\big\langle   (1\bar{3}23)^2, (13)^3, (12)^3, (123\bar{2})^2, (1232\bar{3}\bar{2})^3 (123\bar{2})^2 (12)^3 \big\rangle;$$
\item[{(ii)}]  $\tau=-\frac{1+i\sqrt{7}}{2}, p=3, 4, 5, 6, 8, 12:$ 
$$\big\langle(12)^2,(13)^2,23\bar{2}P^2\big\rangle,$$
where $P=R_1 J;$
\item[{(iii)}] $\tau=\frac{1+\sqrt{5}}{2}, p=3, 4, 5, 10:$ 
$$\big\langle1\bar{3}\bar{2}323, 1312\bar{1}\bar{3}, (1\bar{3}23)^31\bar{3}\bar{2}323\big\rangle.$$
\end{enumerate}
 \end{thm}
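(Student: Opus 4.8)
The plan is to realise each of the three listed groups as a Fuchsian group acting on the complex geodesic $L_1$, which is a copy of the Poincar\'e disk, and to pin down its presentation by means of Poincar\'e's polygon theorem. Recall from \cite{DPP2} that for each admissible pair $(p,\tau)$ one has explicit matrices for $R_1,R_2,R_3$ (equivalently for $R_1$ and $P=R_1J$) in $\SU(2,1)$; I take these as the starting data. Since $R_1$ is a complex reflection of order $p$ with mirror $L_1$, its matrix has an eigenvalue of multiplicity one, and the associated eigenvector is the polar vector $\n_1$ of $L_1$. An element $g$ of $\mathcal{S}(p,\tau)$ stabilises $L_1$ setwise if and only if $g\n_1$ is proportional to $\n_1$, and when this holds $g$ preserves the orthogonal complement $\n_1^{\perp}$, which carries a Hermitian form of signature $(1,1)$; the restriction $g|_{\n_1^{\perp}}$ therefore descends to an isometry of $L_1$. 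So the first step is, for each of the three families, to write the generators appearing in the statement as words in $R_1,R_2,R_3$, to check by direct matrix computation that each such word fixes the line spanned by $\n_1$, and hence to record the induced isometry of the disk together with its type (elliptic, parabolic or loxodromic), its fixed point(s), and, in the elliptic or parabolic cases, its rotation angle or horocyclic data.

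With the restricted generators in hand, the second step is to build a fundamental polygon $\Pi\subset L_1$ for the group $G$ they generate. The combinatorial model for $\Pi$ is dictated by the DPP2 fundamental domain $D$ for $\mathcal{S}(p,\tau)$ in $\overline{\HC}$: the codimension-one faces of $D$ listed in \cite{DPP2} cut $L_1$ in geodesic arcs, and the face-pairing maps of $D$ that preserve $L_1$ induce side-pairing maps of the resulting polygon. This is what forces the specific words in the statement and explains why several of them are proper powers: the ``base'' word carries $L_1$ to a \emph{different} complex geodesic, and only a suitable power brings it back to $L_1$. Concretely I would compute the isometric circles (and, for the elliptic generators, the fixed points) of the restricted generators, verify that the arcs they determine close up into a polygon with the predicted vertex pattern, and identify the side-pairing transformation of each side with one of the listed generators or its inverse.

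The third step is to verify the hypotheses of Poincar\'e's polygon theorem for hyperbolic polygons (the two-dimensional counterpart of the version used in \cite{GuPa}): that the side-pairing is an involution on the set of sides; that for each vertex cycle the interior angles sum to $2\pi/n$ for a positive integer $n$, or to $0$ at an ideal vertex, so that the cycle transformation is elliptic of order $n$, respectively parabolic; and that the resulting quotient is complete, with no accidental identifications. The theorem then yields simultaneously that $\Pi$ is a fundamental domain for $G$, that $G$ is discrete, and that $G$ is presented by the side-pairing generators subject to the cycle relations; after eliminating redundant generators this is exactly the presentation claimed in each of (i)--(iii). Moreover, since $\Pi$ has finite hyperbolic area and is assembled from genuine faces of $D$, the group $G$ has finite index in $\mathrm{Stab}_{\mathcal{S}(p,\tau)}(L_1)$, and the fact that every side of $\Pi$ coming from a face of $D$ is actually paired shows that $G$ \emph{is} this stabiliser, equivalently the associated $\C$-Fuchsian subgroup fixing $L_1$; the only dependence on $p$ is in the orders of the elliptic cycle transformations, which is why the argument runs uniformly over the listed values of $p$ and excludes the others.

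I expect the main obstacle to be twofold. First, correctly reading off the combinatorics of $\Pi$ from the four-real-dimensional DPP2 domains: one must determine precisely which faces meet $L_1$, in which arcs, and how their pairing maps restrict — it is here that the somewhat opaque words such as $(1\bar{3}23)^2$, $23\bar{2}P^2$ or $(1\bar{3}23)^31\bar{3}\bar{2}323$ are discovered, and an error in bookkeeping would derail everything downstream. Second, checking the vertex-cycle angle conditions for \emph{every} $p$ in the stated list, and their failure for the excluded $p$, requires exact trigonometric identities among the matrix entries, which involve both $\tau$ and $\zeta_p=e^{2\pi i/p}$; this is where essentially all of the computation lives. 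By contrast, checking that a given word fixes $\n_1$, that the quotient of $\Pi$ is complete, and reading the presentation off the cycles are all routine once the polygon has been correctly identified.
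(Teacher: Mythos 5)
Your proposal follows essentially the same route as the paper: restrict to the disk $L_1$, build a polygon there whose combinatorics are dictated by the DPP2 side representatives, pair its sides by the listed words, and invoke Poincar\'e's polygon theorem to obtain discreteness, the fundamental domain and the presentation (checking the vertex-cycle angle conditions case by case in $p$). The only notable differences are minor: the paper assembles the polygon by translating the side-representative pyramids by explicit group elements so that all their bases lie in $L_1$ and then pasting those bases (rather than intersecting the four-dimensional domain itself with $L_1$), and it does not assert, as you do at the end, that the resulting group is the full stabiliser of $L_1$ --- that stronger claim is neither proved in the paper nor needed for the statement.
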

 Here we just write $1\bar{3}23$ to denote $R_1R_3^{-1}R_2R_3$ (see Section \ref{Subsec:cr}), etc. 
 %By studying the combinatorics of {{invariant shells }}  (see section 4.1 in \cite{DPP2}) of the fundamental domain for the complex hyperbolic lattice, we distinguish a polygon with finitely many sides lying in the chosen complex geodesic which can be matched by side pairing transformations. Those side pairing transformations are exactly the generators of the $\C$-Fuchsian subgroups as showed in Theorem \ref{thm:main}. We also show the explicit fundamental domains for the $\C$-Fuchsian subgroups as stated above. 
 %{{\textcolor{red}{We also remark that the full stabiliser of $L$ denoted by {\bf Stab}$_{\PU(2, 1)}(L)$ is a central extension of {\bf Stab}$_{\mathcal{S}(p, \tau)}(L)$.}}
 Throughout this paper, we always investigate the $\C$-Fuchsian subgroups fixing a complex geodesic $L_1$. One should note that there naturally exist $\C$-Fuchsian subgroups fixing other complex geodesics in the complex hyperbolic lattice under consideration. For example, we  could get
a $\C$-Fuchsian subgroup  in $\mathcal{S}(3, -1+i\sqrt{2})$ stabilising the complex geodesic $L_{2}$  fixed by the complex reflection $R_2$ which is identified with $JR_1J^{-1}$:
$$\big\langle   J(1\bar{3}23)^2J^{-1}, J(13)^3J^{-1}, J(12)^3J^{-1}, J(123\bar{2})^2J^{-1}, J(1232\bar{3}\bar{2})^3 (123\bar{2})^2 (12)^3J^{-1} \big\rangle.$$

 %On the other hand, let $g\in\PU(2, 1)$ and $L$ be a complex geodesic. For two distinct points $z_1, z_2\in L,$ A simple fact is that $g(z_1), g(z_2)\in L$ yields to $g(L)=L.$ Actually, two distinct points in $\overline\HC$ can span one and only one complex line (see Section \ref{Subsec:tgs}), i.e. lie in one and only one complex geodesic simultaneously. 
 %we If $g$ maps complex geodesic $L$ to another complex geodesic $L^\prime$. 
% Therefore we can get that $g(L)=L.$
% From this point of view, we can also get that each side paring transformations does fix the choosing complex geodesic.

 %The polygon is actually the fundamental domain of $\C$-Fuchsian subgroups, i.e.  Each lattice is of  sporadic triangle group or Thompson group, see details in Section \ref{Sec:ST}.{\textcolor{red}{state why it might be interesting to describe such subgroups }. motived by the above 
 %The purpose of this paper is to consider the  $\mathbb{C}-$ Fuchsian subgroups in the triangle lattices, and show their structure. It is important for imaging  the combinatorial of fundamental domain. SU(1, 1)lattice in SU(2, 1)}
In \cite{DPP2}, the authors build blocks of the fundamental domains bounded by spherical shells that surround the fixed point of $P=R_1J$ for the lattices of equilateral triangle groups type or $Q=R_1R_2R_3$ in the non-equilateral case. A spherical shell here means that the corresponding cell complex is an embedded copy of $S^3,$ which bounds a well-defined 4-ball. Surrounding a point just means that the point is in the ball component of the complement of that copy of $S^3.$ The basic building blocks for their fundamental domains are pyramids (for example, see Figure 1) in bisectors. They finally list side (codimension-1) representatives for each $P$-orbit of sides (or $Q$-orbit in the non-equilateral case), and one side for each pair of opposite sides which means paired in the sense of the Poincar\'e polyhedron theorem, see Appendix in \cite{DPP2}. In the present paper, our general procedure to distinguish $\mathbb{C}$-Fuchsian subgroups is as follows: We firstly focus on the pyramids of the side representatives of the fundamental domain for the non-arithmetic lattices; secondly, for each lattice, we force the side representatives to have the same base $L_1$ and obtain a polygon lying in the complex geodesic $L_1$; finally we prove that the polygon is a fundamental domain of some subgroup of the lattice. Actually the polygon can be matched by side pairing transformations, which are exactly the generators of the $\mathbb{C}$-Fuchsian subgroups as showed in Theorem \ref{thm:main}. We also give the natural presentation for each $\mathbb{C}$-Fuchsian subgroup among the proof.

The paper is arranged as follows. Section 2 contains background material about complex hyperbolic plane $\HC$, totally geodesic subspaces and complex reflection. In Section 3 we recall the normalisation of two kinds of complex hyperbolic triangle groups in $\PU(2, 1)$: equilateral triangle groups and non-equilateral triangle groups, in which we will clarify the $\C$-Fuchsian subgroups. 
%which are the stabilisers of the complex geodesics. 
In Section 4, we mainly state and prove our theorems, including describing the fundamental domains of certain $\C-$Fuchsian subgroups. 
% From this point of view we show that they are lattice which can be embedded in $\SU(1, 1)$.

\section{Preliminaries}\label{Sec:pre}
%\subsection{Real hyperbolic plane}
The material for this section is standard. The reader may refer to \cite{Gol} for more details.
\subsection{Complex hyperbolic plane} We use $\C^{2, 1}$ to denote $\C^3$ equipped with a Hermitian form of signature $(2, 1)$. If we assume that {\bf P} is the canonical projectivisation from $\C^{2, 1}$ to 
${\bf P}_\mathbb{C}^2$ and suppose that the Hermitian form of signature $(2, 1)$ to be ${\textrm H}$, then the {\textit{complex hyperbolic plane}} $\HC$ can be defined as follows:
\[
\HC:={\bf P}\{{\bf z}\in \C^{2, 1}: \langle{\bf z, z} \rangle= {\bf\bar{z}^{t}}\H{\bf z}<0\}.
\]
Correspondingly, the boundary $\partial\HC$ of complex hyperbolic plane is 
\[
\partial\HC:={\bf P}\{{\bf z}\in \mathbb{C}^3: \langle {\bf z, z} \rangle= {\bf\bar{z}}^{t}\H{\bf z}=0\}.
\]
There exists a natural action of the unitary group ${\rm U}(2, 1)$ of the Hermitian from on $\HC.$
The automorphism group of $\HC$ is then $\PU(2, 1)$, the projectivisation of ${\rm U}(2, 1)$. In particular, ${\rm SU}(2, 1)$ is the subgroup of  ${\rm U}(2, 1)$ with the determinant of each element being $1$, which is the three fold cover of the projection group $\PU(2, 1)$.

Let $z$ and $w$ be points in $\HC$ corresponding to vectors ${\bf {z, w}}\in\C^{2, 1}.$ Then the {\textit{Bergman metric}} $\rho$ on $\HC$ is given by the following distance formula:
$$\cosh^2 \left(\frac{\rho(z, w)}{2}\right)=\bf \frac{\langle z,w\rangle\langle w, z\rangle}{\langle z, z\rangle\langle w, w\rangle}.$$
If we choose the Hermitian form of signature $(2, 1)$ as follows
$$\langle {\bf{z, w}}\rangle=z_1\overline{w_1}+z_2\overline{w_2}-z_3\overline{w_3},$$
with ${\bf{z}}=[z_1, z_2, z_3]^t,\, {\bf{w}}=[w_1, w_2, w_3]^t,$ then the complex hyperbolic plane $\HC$ can be described in the affine chart $z_3\neq0$ as the unit ball in $\mathbb{C}^2$ endowed with the unique K\"ahler metric invariant under all biholomorphisms of the ball. The metric is symmetric and has non-constant negative real sectional curvature but pinched between $-1$ and $-1/4$. We normalise its holomorphic sectional curvature to be $-1$.

An automorphisms of $\HC$ is said to be elliptic if it fixes at least one point of $\HC$,
parabolic if it fixes exactly one point of $\partial\HC$, loxodromic if it fixes exactly two points of $\partial\HC$. Throughout this paper, we freely use the classification of automorphisms of $\HC$ into regular elliptic, complex reflection, ellipto-parabolic, unipotent parabolic and loxodromic, e.g., an automorphism is regular elliptic if and only if it has a fixed point in $\HC$ and has distinct eigenvalues. We refer to Section 6.2 of \cite{Gol} for the details.
%Throughout this paper, we freely use the classification of automorphisms of $\HC$ into regular elliptic, complex reflection, ellipto-parabolic, unipotent parabolic and loxodromic elements, e.g., an automorphism is regular elliptic if and only if it has a fixed point in $\HC$ and has distinct eigenvalues. We refer to section 6.2 of \cite{Gol} for the details.

\subsection{Totally geodesic subspaces}\label{Subsec:tgs} Given two points $z$ and $w$ in $\overline{\HC}:=\HC\cup\partial\HC,$ with lifts $\bf{z, w}$ to $\mathbb{C}^{2, 1}$ respectively, the complex span of $\bf{z}$ and ${\bf{w}}$ projects to a complex projective line in ${\bf P}_\mathbb{C}^2$. The intersection of a complex projective line with $\HC$ is called a {\textit{complex geodesic}} $L$ (homeomorphic to an open 2-dimensional disk), which can be 
simply obtained by taking the intersection of orthogonal complement of a positive vector $\bf{n}$ with $\HC$, i.e., $$L={\bf P}\{{\bf z}\in\C^{2, 1}: \langle {\bf z, n}\rangle=0\}\cap\HC.$$ We refer to $\bf{n}$ as a {\textit{polar vector}} to  $L$.

A maximal totally geodesic subspace in $\HC$ can only be one of the following:
\begin{enumerate}
\item[{(i)}] A complex geodesic, which is an isometrically embedded copy of $\textbf{H}^{1}_{\mathbb{C}}$. It has the Poincar\'e model of hyperbolic geometry with constant curvature $-1$;
\item[{(ii)}] A totally real Lagrangian plane, which is an isometrically embedded copy of $\textbf{H}^{2}_{\mathbb{R}}$. It has the Beltrami-Klein projective model with constant curvature $-1/4$.
\end{enumerate}

%The intersection of a complex geodesic $L$ with $\partial \HC$ is called a \emph{$\mathbb{C}$-circle}. Correspondingly, the intersection of a totally real Lagrangian plane with $\partial\HC$ is called an \emph{$\mathbb{R}$-circle}.
%For more details we refer for instance to \cite{Gol}.

\subsection{Complex reflection}\label{Subsec:cr} Suppose that the polar vector of a complex geodesic $L_{1}$ is $\bf n_{1}$. We consider the complex reflection $R_{1}$ in the complex geodesic $L_{1}$ which is of order $p$, i.e., complex reflection $R_{1}$ in $\rm{U}(2, 1)$ maps $\bf n_{1}$ to $e^{i\phi}\bf n_{1}$, where $\phi=2\pi/p$. Throughout this paper, we assume that $p\in\mathbb{Z}$ and $p\geq2$. We take one lift of $R_{1}$ to a matrix in $\SU(2, 1)$ and  write the map here with the same symbol:
\begin{equation}\label{ref:l}
R_{1}({\bf z})=e^{-\frac{i\phi}{3}}{\bf z}+(e^{\frac{2i\phi}{3}}-e^{-\frac{i\phi}{3}})\frac{\langle {\bf z}, {\bf {\bf{n_1}}}\rangle}{\langle {\bf {\bf{n_1}}}, {\bf {\bf{n_1}}}\rangle}{\bf {\bf{n_1}}}.
\end{equation}
%{\textcolor{red}{matrix representation} }\\
A basic fact is that any complex reflection is an element of $\PU(2, 1)$. We will restrict to the complex hyperbolic triangle groups generated by three complex reflections with the same order $p$
$(p\geq2).$ 
In order to avoid tedious notation, we denote the three generators $R_1,\, R_2,\, R_3$ of complex hyperbolic triangle groups simply by $1,\, 2,\, 3$. Unless otherwise stated,  in what follows we also denote their inverse by $\bar{1},\, \bar{2},\, \bar{3}.$ In this way, we just write $1\bar{3}23$ to denote $R_1R_3^{-1}R_2R_3$, etc.

We recall the definition for {\textit{braid relation}} between group elements (see Section 2.2 of \cite{Mos}). Let $G$ be a group  and $a, b\in G$. Then we will say that $a, b$ satisfy a braid relation of length $n\in \mathbb{Z}_{+}$ if 
$$(ab)^{n/2}=(ba)^{n/2},$$
where powers mean that the corresponding alternating product of $a$ and $b$ should have $n$ factors. We denote the braid length $n$ of $a, b$ by ${\rm{br}}_n(a, b)$. For example, ${\rm{br}}_3(a, b)$ means that $aba=bab.$

Let $A$ and $B$ be two complex reflections in distinct complex geodesics $L_A$ and $L_B$ respectively, which correspond to polar vectors 
$\n_A$ and $\n_B.$ The cross-product ${\bf{z}}:=\n_A\boxtimes\n_B$ is defined as 
$${\bf{z}}=({\overline{\n_A}}^t{\rm H})\times({\overline{\n_B}}^t{\rm H}).$$

Then three possibilities arise (see Section 3.3.2 in Goldman \cite{Gol}):
\begin{enumerate}
\item{} $\bf{z}$ is negative, namely $\langle \bf{z},  \bf{z}\rangle<0$. In this case $L_A$ and $L_B$ intersect in $\bf{P}(\bf{z})\in\HC$ corresponding to the negative vector $\bf{z}$;
\item{}$\bf{z}$ is null, namely $\langle \bf{z},  \bf{z}\rangle=0$. In this case $L_A$ and $L_B$ are asymptotic at the point $\bf{P}(\bf{z})\in\partial\HC$;
\item{}$\bf{z}$ is positive, namely $\langle \bf{z},  \bf{z}\rangle>0$. In this case $L_A$ and $L_B$ are ultraparallel, that is they are disjoint and have a common
orthogonal complex geodesic, which is polar to $\bf{z}$.
\end{enumerate}

%We will mainly restrict to the intersection case in Section 4.

%\subsection{Fuchsian groups} A discrete group $G\subset\SU(2, 1)$ is non-elementary if the limit set of $G$ has more than two points. Following \cite{CG1}, for any non-elementary discrete subgroup $G$ of $\SU(2, 1)$, 
%\begin{enumerate}
%\item[{(i)}] $G$ is called $\bf{\C-Fuchsian}$ if it fixes a complex geodesic;
%\item[{(ii)}] $G$ is called $\bf{\mathbb{R}-Fuchsian}$ if it fixes a totally real Lagrangian plane.
%\end{enumerate}
%Throughout this paper, we restrict to $\C - $Fuchsian groups, and simply write as Fuchsian groups.
%$G$ is called $\bf{\C-Fuchsian}$ if it is the extension of a subgroup of $\textrm{SU}(1, 1)$;
%$G$ is called $\bf{\mathbb{R}-Fuchsian}$ if it is the extension of a subgroup of $\textrm{SO}(2, 1).$ In other words, 

\section{Sporadic triangle groups and Thompson triangle groups}\label{Sec:ST}
In this section we review sporadic triangle groups  (Section \ref{Subsec:spo}) and Thompson triangle groups (Section \ref{Subsec:Tho}), which we will mainly study in Section 4. For these two kinds of complex hyperbolic triangle groups, we refer for instance to \cite{DPP2, PP, Thom} for the details.

\noindent \subsection{ {\bf Equilateral triangle groups}.}\label{Subsec:spo}
Recall from the introduction that an equilateral triangle group can be generated by a complex reflections $R_1$ and a complex hyperbolic isometry $J$ of order 3. Let 
$$R_{2}=JR_{1}J^{-1},\quad R_{3}=JR_{2}J^{-1}$$
  The equilateral triangle groups then can be parameterised by the order $p$ of generators and the complex parameter
\[
\tau=\tr(R_1J).
\]
It is difficult to give the conditions of $p$ with $\tau$ so that the equilateral triangle group is a lattice, or at least discrete. However, the pairwise product of generators should be non-loxodromic (see \cite{Sch4}). 
%Also Parker investigated some candidates of $\mathcal{S}(p, \tau)$ to be discrete in \cite{Par3}. 
This shows that there are two continuous families satisfying that $R_1 J$ and $R_1 R_2$ are elliptic
\[
\tau=-e^{i\psi/3},\qquad  ~ \qquad \tau=e^{i\psi/6}\cdot2\cos(\psi/2), 
\]
where $\psi$ are rational multiples of $\pi$. 
These two families correspond to Mostow groups or certain subgroups of Mostow groups. For such groups, the list of lattices can be obtained from the work of Deligne-Mostow (see \cite{Par3, PP}). 
There are still lattice candidates not lying on these two families. In \cite{DPP2} the authors  show that the equilateral triangle groups for some values of $\tau=\tr(R_1J)$ indeed contain lattices, of which the explicit values of $\tau$ and $p$ are in Table 1. They are called {\textit{sporadic triangle groups}}.  We denote the corresponding group by $\mathcal{S}(p, \tau)$. 

Note that the list here is given up to complex conjugation and multiplication by a cube root of unity. 
In Section 4, we will give the analysis on $\C$-Fuchsian subgroups of complex hyperbolic lattices $\mathcal{S}(p, \tau)$
for $\tau=\tau_1,\, \tau_2,\, \tau_{4}$. %It is plausible to consider that one could also identify the $\C$-Fuchsian subgroups for $\tau_3;$ however, this has not been achieved by our present method.
 %\vskip 1\baselineskip
 \begin{center}
\begin{table}[h]\label{spo}
\begin{tabular}{|c|c|}
\hline
$\tau$ & $p$ \\
\hline
$\tau_1=-1+i\sqrt{2}$&$3, 4, 6$\\\hline
$\tau_2=-(1+i\sqrt{7})/2$&$3, 4, 5, 6, 8, 12$\\\hline
$\tau_3=e^{-\pi i/9}(-e^{-2\pi i/3}-(1-\sqrt{5})/2)$&$2, 3, 4$\\\hline
$\tau_{4}=(1+\sqrt{5})/2$&$3, 4, 5, 10$\\\hline
\end{tabular}
\vskip0.5\baselineskip
\caption{Values of $p, \tau$ such that $\mathcal{S}(p, \tau)$ are lattices.}
\end{table}
\end{center}
\noindent \subsection{ {\bf Non-equilateral triangle groups}.} \label{Subsec:Tho}
In this section, we review notation for the non-equilateral triangle groups which come from Thompson's thesis \cite{Thom}. 
%Thompson mainly investigated each generators of triangle groups is just of order $2$. 
They can be parameterised by a triple of complex numbers $\rho,\, \sigma,\, \tau$. The three numbers will be all equal to $\tau$ as above in the case of equilateral triangle. In the same fashion, we assume that the generators are of order $p$, $u=e^{2 \pi i/3p}$ and the Hermitian form is
\[
\H=\left(\begin{matrix}
  \alpha &\quad \beta_1 &\quad \bar{\beta_3}\\
  \bar{\beta_1}&\quad \alpha&\quad \beta_2\\
  \beta_3&\quad \bar{\beta_2}&\quad \alpha
\end{matrix}\right),
\]
where $\alpha=2-u^3-\bar{u}^3$, $\beta_1=(\bar{u}^2-u)\rho$, $\beta_2=(\bar{u}^2-u)\sigma$, $\beta_3=(\bar{u}^2-u)\tau$ and
\begin{small}
\[
\rho=(u^2-\bar{u}){\bf\frac{\langle {\bf{n_2}}, {\bf{n_1}}\rangle}{\|{\bf{n_2}}\|\| {\bf{n_1}}\|}},\quad
\sigma=(u^2-\bar{u}){\bf\frac{\langle {\bf{n_3}}, {\bf{n_2}}\rangle}{\|{\bf{n_3}}\|\| {\bf{n_2}}\|}},\quad
\tau=(u^2-\bar{u}){\bf\frac{\langle {\bf{n_1}}, {\bf{n_3}}\rangle}{\|{\bf{n_1}}\|\| {\bf{n_3}}\|}}.
\]
\end{small}
The generators which preserve the above Hermitian form $\H$ are given by
\begin{small}
\begin{equation}\label{Thom:mat}
\quad
R_1=\left(\begin{matrix}
  u^2 &\quad \rho &\quad -u\bar{\tau}\\
  0&\quad \bar{u}&\quad 0\\
  0&\quad 0&\quad \bar{u}
\end{matrix}\right),\quad
R_2=\left(\begin{matrix}
  \bar{u} &\quad 0 &\quad 0\\
  -u\bar{\rho}&\quad u^2&\quad \sigma\\
  0&\quad 0&\quad \bar{u}
\end{matrix}\right),\quad
R_3=\left(\begin{matrix}
  \bar{u} &\quad 0 &\quad 0\\
  0&\quad \bar{u}&\quad 0\\
  \tau&\quad -u\bar{\sigma}&\quad u^2
\end{matrix}\right).
\end{equation}
\end{small}
The elements  $R_1, R_2, R_3$ are determined up to conjugacy by $|\rho|, |\sigma|, |\tau|$ and $\arg(\rho\sigma\tau)$, see \cite{DPP, PP}.
\begin{table}[h]\label{Tab:tom}
\begin{tabular}{|c|c|c|c|c|c|c|}
\hline
 &$a$\quad $b$\quad $c$\quad $d$&$o(123)$&$\rho$&$\sigma$&$\tau$&lattices for $p$ \\
\hline
$S_2$&$3$\quad $3$\quad $4$\quad $5$&  $5$& $1+\frac{1+\sqrt{5}}{2}e^{2\pi i/3}$& $1$& $1$& $3,4,5$\\\hline
${E_2}$&$3$\quad $4$\quad $4$\quad $4$&  $6$& $\sqrt{2}$& $e^{-2\pi i/3}$& $\sqrt{2}$& $3,4,6,12$\\\hline
$H_1$&$3$\quad $3$\quad $4$\quad $7$&  $42$& $\frac{-1+i\sqrt{7}}{2}$& $e^{-4\pi i/7}$& $e^{-4\pi i/7}$& $2, -7$\\\hline
$H_2$&$3$\quad $3$\quad $5$\quad $5$&  $15$& $-1-e^{-2\pi i/5}$& $e^{4\pi i/5}$& $e^{4\pi i/5}$& $2, 3,5,10, -5$\\\hline
\end{tabular}
\vskip0.5\baselineskip
\caption{Lists of parameters of some lattices in Thompson triangle groups. The negative values of $p$ correspond to the conjugate values of parameters of Thompson triangle groups.}
\end{table}
Suppose that the order of $23,\, 31,\, 12$ and $1\bar{3}23$ are $a,\, b,\, c,\, d$ respectively. We write $(a, b, c; d)$ for the groups generated by complex reflections in a triangle with angles $\pi/a, \pi/b, \pi/c$ satisfying that the order of $1\bar{3}23$ is $d$. Note that here $a,\, b,\, c\geq 3$ because the $(2, b, c)$ triangle groups are rigid in $\PU(2, 1)$. In Table 2, we list only some values of $\rho, \sigma, \tau$ which correspond to lattices. For the construction of the fundamental domain of these lattices, we refer to \cite{DPP2} for further details. We give the explicit structures of the $\C$-Fuchsian subgroups stabilising the complex geodesic $L_1$ in Thompson triangle groups $S_2$ and $E_2$ after Remark \ref{rem:Thom} 
%Theorem \ref{thm:thom1} and Theorem \ref{thm:thom2}. %Also one could possibly identify the $\C$-Fuchsian subgroups for $H_1$, $H_2;$ however, this has not been achieved by our present method.

It is plausible to consider that one could also identify the $\C$-Fuchsian subgroups for 
$\mathcal{S}(p, \tau_3)\, (p=2, 3, 4),$ $H_1$, $H_2;$ however, it has not been achieved by our present method. The main difficulty is to find an appropriate polygon and the transformations which pair the sides of the polygons lying the complex geodesics under consideration.
%{ \textcolor{red}{Conjecture for $\sigma_5$ and some words for H1and H2.}}

\section{$\C$-Fuchsian subgroups and their explicit Fundamental domains}\label{Sec:thm}
Let us firstly recall the Poincar\'e polygon theorem in hyperbolic plane (see \cite{Bea}), which is the tool for us to elaborate the structure of certain $\C$-Fuchsian subgroups in complex hyperbolic triangle lattices, then give the proof of Theorem \ref{thm:main}.
\begin{thm}\label{thm:tool}
Let $D$ be a polygon in the hyperbolic plane satisfying the following conditions and denote $D\cup \partial D$ by $\bar{D}$.\\
\emph{(i)} For each side $s$ of $D$, there is a side $s'$ and an element $g_s$ (of the isometries of the hyperbolic plane) such that $g_s(s)=s'$, we call each $g_s$ the side pairing transformation.\\
\emph{(ii)} $g_{s'}=g_s^{-1}$. Observe that if there is a side $s$, with $s'=s,$ then it implies that $g_s^2=Id.$ If this occurs, the relation $g_s^2=Id$ is called a reflection relation. Now let $G$ be the group generated by the $g_s's$.\\ 
\emph{(iii)} $g_s(D)\cap D=\varnothing.$\\
\emph{(iv)} For each vertex $x$ of $D$, there are vertices $x_0 (=x), x_1, \cdots, x_n$ of $D$ and elements $f_0 (=Id), f_1, \cdots, f_n$ of $G$ such that the sets $f_j(N_j)$ ($N_j=\{y\in \bar{D}: d(y, x_j)<\epsilon \}$) are non-overlapping sets whose union is $B(x, \epsilon)$ (the ball centered at 
$x$ with radius $\epsilon$) and such that each $f_{j+1}$ is of the form $f_jg_s$ for some $s$ ($j=1, \cdots, n; f_{n+1}=Id$).\\
\emph{(v)} The $\epsilon$ in the above condition can be chosen independently of $x$ in $\bar{D}$.\\
Then the group $G$ generated by the side pairing transformations is discrete, and $D$ is a fundamental polygon for $G.$

\end{thm}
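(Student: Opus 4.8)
The plan is to run the classical \emph{developing-map} argument for Poincar\'e's theorem, with conditions (i)--(v) feeding in exactly where they are needed. First I would build, purely from $\bar D$ and the combinatorial data $(s\mapsto s',\,g_s)$, an abstract metric space $M$, and then prove that the tautological map $M\to\mathbf{H}^2$ is a homeomorphism; every assertion about $G$ then drops out by unwinding. Concretely: give $G$ the discrete topology, set $\widehat D=G\times\bar D$, and declare $(g,x)\sim(gg_s^{-1},g_s(x))$ whenever $x$ lies on a side $s$ of $D$. Condition (ii) (which includes $g_s^2=Id$ when $s'=s$) makes this relation symmetric, so its transitive closure is an equivalence relation; let $M=\widehat D/{\sim}$ with $\pi\colon\widehat D\to M$ the projection. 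Each ``tile'' $\{g\}\times\bar D$ carries the hyperbolic metric it has as a subset of $\mathbf{H}^2$, and since every $g_s$ is an isometry these metrics agree along the glued sides, so $M$ becomes a length space with a natural length-nonincreasing map to $\mathbf{H}^2$. It is straightforward to check that $M$ is Hausdorff and connected — the latter because $G$ is generated by the $g_s$, so every tile is joined to $\{Id\}\times\bar D$ by a finite chain of side-gluings.

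Next I would verify that this map is in fact a local isometry, i.e.\ an open local homeomorphism, \emph{everywhere}. Away from images of vertices this is automatic from (i)--(iii), since (iii) keeps $g_s(D)$ on the side of $s'$ opposite $D$, so tiles glue without folding. At the image of a vertex $x$, condition (iv) is exactly what is needed: it says that finitely many ``pie slices'' $f_j(N_j)$, transported in by elements of $G$, tile a genuine round ball $B(x,\epsilon)$ with no overlaps, and that going once around closes up, $f_{n+1}=Id$. This cycle condition forces the hyperbolic angles glued around $\pi(x)$ to add up to $2\pi$ (with reflection relations from (ii) absorbed), so $M$ is an honest hyperbolic surface also at vertex images, and
\[
\mathrm{dev}\colon M\longrightarrow\mathbf{H}^2,\qquad \mathrm{dev}\big(\pi(g,x)\big)=g(x),
\]
which is well defined because $gg_s^{-1}(g_s(x))=g(x)$, is a local isometry. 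Now condition (v) — that $\epsilon$ may be chosen uniformly over $\bar D$ — together with the finiteness of the sides and vertices (and, at any ideal vertices, the parabolic cycle transformations producing matching cusp ends) is what I would use to show $M$ is a complete length space. A local isometry from a complete connected Riemannian surface to $\mathbf{H}^2$ is a covering map, and since $\mathbf{H}^2$ is simply connected and $M$ is connected, $\mathrm{dev}$ is therefore a homeomorphism.

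Finally I would unwind. Since $\mathrm{dev}$ is injective and restricts to $g$ on the tile $\{g\}\times\bar D$, the translates $g\bar D$ ($g\in G$) cover $\mathbf{H}^2$ (surjectivity) and have pairwise disjoint interiors (two interior points in a common image would be identified in $M$, impossible, as interior points are glued to nothing). Thus $\{g\bar D\}$ is a tessellation of $\mathbf{H}^2$, which is precisely the statement that $D$ is a fundamental polygon for $G$. Moreover, by (iv) each point of $\mathbf{H}^2$ lies in only finitely many tiles, so the tessellation is locally finite; a group acting on $\mathbf{H}^2$ with a locally finite fundamental tessellation is necessarily discrete (if $g_k\to Id$, then $g_k\bar D$ meets a fixed compact neighbourhood of an interior point of $D$ for all large $k$, hence $g_k=Id$ eventually). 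This gives both conclusions, and the same simply connected picture also yields the standard presentation (generators $g_s$, reflection relations from (ii), cycle relations $f_{n+1}=Id$ from (iv)) used elsewhere in the paper. The hard part, where care is genuinely required, is this pair of steps: (a) checking that condition (iv) really does pin the angle sum around every vertex image to $2\pi$ — one must chase the chain $f_{j+1}=f_jg_s$ all the way around, track orientation, and handle any reflection relations and ideal vertices separately; and (b) upgrading the resulting local isometry to a covering via completeness of $M$, which is exactly where the uniformity of $\epsilon$ in (v) is indispensable and where a naive argument breaks down when $D$ is non-compact.
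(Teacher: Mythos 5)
This theorem is not proved in the paper at all: it is quoted from Beardon \cite{Bea} as a known tool, so there is no internal argument to compare against, and the relevant comparison is with the proof in that reference. Your developing-map argument is essentially that standard proof: glue copies of $\bar{D}$ along the side-pairing data, use (i)--(iii) to get a local homeomorphism along edge interiors, use the cycle condition (iv) to get it at vertex images (the slices $f_j(N_j)$ tiling $B(x,\epsilon)$ is exactly the angle-sum statement you need), use the uniform $\epsilon$ of (v) for completeness, and conclude by the covering-space argument onto the simply connected hyperbolic plane, then unwind to get the tessellation and discreteness. The one structural difference from Beardon is that he glues $G^{*}\times\bar{D}$, where $G^{*}$ is the abstract group on the side-pairing symbols modulo the reflection and cycle relations, and proves at the end that the natural map $G^{*}\to G$ is an isomorphism; that extra layer is what actually delivers the presentation of $G$. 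Gluing with $G$ itself, as you do, suffices for the two conclusions stated in the theorem (discreteness and the fundamental polygon), but your closing remark that the same picture ``also yields the standard presentation'' is not justified by your construction as written --- with $G$ in place of $G^{*}$ you cannot detect whether extra relations hold. The two points you flag as delicate (the angle count around vertex images, and completeness when $D$ is non-compact or has ideal vertices with parabolic or ellipto-parabolic cycle maps) are indeed where the care is needed, and the latter is genuinely relevant here, since the paper applies the theorem to polygons some of whose vertices lie on the boundary circle.
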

%In what follows we study the $\C$-Fuchsian subgroups in some non-arithmetic complex hyperbolic lattices, each of which is type of sporadic triangle group or Thompson group. 
Before we give the proof of Theorem \ref{thm:main}, we state the Cosine Rule for a hyperbolic triangle; this will be our tool of checking the local tiling condition near each vertex below. 
\begin{prop}
Suppose that a hyperbolic triangle have sides $a,$ $b$ and $c$ and opposite angles $\alpha,$ $\beta,$ and  $\gamma.$ Then the following formula holds
\begin{equation}\label{Cosine}
\cosh c=\cosh a\cosh b- \sinh a\sinh b \cos \gamma.
\end{equation}
\end{prop}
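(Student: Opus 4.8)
The plan is to work in the hyperboloid (Minkowski) model of the hyperbolic plane of constant curvature $-1$, which is exactly the geometry carried by the complex geodesic on which Theorem \ref{thm:tool} is applied. Realise $\mathbf{H}^2$ as the upper sheet $\{x\in\mathbb{R}^{2,1}:\langle x,x\rangle=-1,\ x_3>0\}$ with $\langle x,y\rangle=x_1y_1+x_2y_2-x_3y_3$. I will use the three standard facts: for unit timelike vectors $x,y$ on this sheet $\cosh\!\big(d(x,y)\big)=-\langle x,y\rangle$; the tangent space at a point $x$ is $x^{\perp}$; and the restriction of $\langle\cdot,\cdot\rangle$ to $x^{\perp}$ is positive definite and realises the Riemannian metric, so that the angle $\theta$ between $u,v\in x^{\perp}$ satisfies $\cos\theta=\langle u,v\rangle/(\|u\|\,\|v\|)$.

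First I would fix lifts $A,B,C$ on the hyperboloid of the three vertices so that the side of length $a$ joins $B$ and $C$, the side of length $b$ joins $C$ and $A$, and the side of length $c$ joins $A$ and $B$; thus $\cosh a=-\langle B,C\rangle$, $\cosh b=-\langle C,A\rangle$, $\cosh c=-\langle A,B\rangle$. Next I would write down the initial tangent vectors at the vertex $C$ of the two sides through $C$: the geodesic from $C$ toward $A$ has tangent direction $u:=A-\langle A,C\rangle\,C=A-\cosh b\;C$, and likewise $v:=B-\cosh a\;C$ points toward $B$. A one-line computation using $\langle A,A\rangle=\langle B,B\rangle=\langle C,C\rangle=-1$ gives $\langle u,u\rangle=\sinh^{2}b$ and $\langle v,v\rangle=\sinh^{2}a$, hence $\|u\|=\sinh b$ and $\|v\|=\sinh a$.

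Then I would compute $\langle u,v\rangle=\langle A,B\rangle-\cosh a\,\langle A,C\rangle-\cosh b\,\langle C,B\rangle+\cosh a\cosh b\,\langle C,C\rangle$; substituting the inner-product values above, this collapses to $\cosh a\cosh b-\cosh c$. Dividing by $\|u\|\,\|v\|=\sinh a\sinh b$ yields $\cos\gamma=(\cosh a\cosh b-\cosh c)/(\sinh a\sinh b)$, which rearranges to \eqref{Cosine}; the angle $\gamma$ being unrestricted in $(0,\pi)$, the formula is valid also in the obtuse case.

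The only point needing care — the main obstacle, such as it is — is the justification that this Minkowski model is indeed the curvature $-1$ hyperbolic plane occurring in Theorem \ref{thm:tool}: namely that $\cosh d=-\langle x,y\rangle$ and that angles are measured by the form restricted to $x^{\perp}$. Both are standard (see \cite{Bea}, or the references in \cite{Gol}), and once they are granted the remainder is pure algebra. As an alternative one could run the argument in the unit disk or the upper half-plane model by normalising one vertex to the origin (resp. to $i$) and reducing to planar trigonometric identities, but the vector computation above is the shortest and is compatible with the conventions of Section \ref{Sec:pre}.
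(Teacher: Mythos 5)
Your argument is correct and self-contained, whereas the paper itself does not prove the proposition at all: it simply quotes the hyperbolic cosine rule and refers to Section 7.12 of \cite{Bea} for the details. Beardon's treatment works in the unit disc model, normalising one vertex to the origin and reducing the identity to planar trigonometry of Euclidean circles; your hyperboloid computation is the coordinate-free alternative, and it has the advantage of being a three-line algebraic verification once the two standard facts ($\cosh d(x,y)=-\langle x,y\rangle$ and angles measured by the restriction of the form to $x^{\perp}$) are granted, as well as matching the linear-algebra conventions (polar vectors, box products, Hermitian signature) used throughout Section \ref{Sec:pre}. One small slip to fix: the orthogonal projection of $A$ onto $C^{\perp}$ is $A-\frac{\langle A,C\rangle}{\langle C,C\rangle}C=A+\langle A,C\rangle C$, not $A-\langle A,C\rangle C$; since $\langle A,C\rangle=-\cosh b$ this is exactly the vector $A-\cosh b\,C$ that you actually use, so the displayed formula $u=A-\cosh b\,C$ and everything downstream ($\|u\|=\sinh b$, $\langle u,v\rangle=\cosh a\cosh b-\cosh c$, hence \eqref{Cosine}) is right; only the intermediate expression has the sign of the coefficient of $C$ reversed. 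With that corrected, your proof is a perfectly good replacement for the citation, and arguably more in the spirit of the paper's computations than the disc-model argument it points to.
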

For the details, see  Section 7.12 in \cite{Bea}. 

%When we check the fundamental domain for each $\C$-Fuchsian subgroup, we will repeatedly use the following simple fact.
%\begin{lem}
%Assume that $g\in\PU(2,1)$ and two distinct points $z_1, z_2\in\overline{\HC}.$ If $z_1, z_2$ lie in a complex geodesic $L$ and  $g(z_1), g(z_2)$ also lie in complex geodesic $L$, then $g(L)=L.$
%\end{lem}
\begin{proof}[Proof of Theorem \ref{thm:main}]

Suppose that ${\bf n}_i$ is the polar vector of $R_i\,(i=1, 2, 3)$ and $u=e^{i\phi/3}=e^{2ip/3}$. By the trace formula of $\tr(R_1J)$ in \cite{Par3}, we may write $\tau$ as
\[
\tau=\tr(R_1J)=(u^2-\bar{u}){\bf\frac{\langle  n_{j+1}, n_{j}\rangle}{\|n_{j+1}\| \|n_j\|}}.
\]
We normalise $\n_i$ so that $\langle \n_i, \n_i\rangle=2-u^3-\bar{u}^3$. Then one can get that $\langle \n_{i+1}, \n_i\rangle=(\bar{u}^2-u)\tau.$  We now choose the polar vectors $\bf{n_i}$ of the complex geodesics $R_i$ $(i=1, 2, 3)$ to be the normal basis of $\C^3$, i.e., 
\begin{equation}
{\bf{n}}_1=\left[\begin{matrix}
  1\\
  0\\
  0
\end{matrix}\right],\quad
{\bf{n}}_2=\left[\begin{matrix}
  0\\
  1\\
  0
\end{matrix}\right],\quad
{\bf{n}}_3=\left[\begin{matrix}
  0\\
  0\\
  1
\end{matrix}\right].
\end{equation}
Therefore the corresponding matrix representation of complex hyperbolic isometry $J$ and the Hermitian form $H$ are given respectively by 
\begin{equation}
J=\left(\begin{matrix}
 0\quad&0\quad&1\\
 1\quad&0\quad&0\\
0\quad&1\quad&0
\end{matrix}\right),
\qquad
\H=\left(\begin{matrix}
  \alpha &\quad \beta &\quad \bar{\beta}\\
  \bar{\beta}&\quad \alpha&\quad \beta\\
  \beta&\quad \bar{\beta}&\quad \alpha
\end{matrix}\right).
\end{equation}
where $\alpha=2-u^3-\bar{u}^3,\, \beta=(\bar{u}^2-u)\tau.$ Then the Hermitian form is of signature $(2, 1)$ if and only if
$${\rm{det}(H)}=\alpha^3+2\Re (\beta^3)-3\alpha|\beta|^2<0.$$
All the lattices we will consider below satisfy the above inequality.
We can get the matrix representation of $R_1$ in $\rm{SU}(2, 1)$ by the formula (\ref{ref:l})
\begin{equation}
R_1=\left(\begin{matrix}
  u^2 &\quad \tau &\quad -u\bar{\tau}\\
  0&\quad \bar{u}&\quad 0\\
  0&\quad 0&\quad \bar{u}
\end{matrix}\right).
\end{equation}
  Correspondingly, one can get the matrix forms of $R_2, R_3$ by the relations
$$R_2=JR_1J^{-1},\, R_3=JR_2J^{-1}.$$

Let $\bf{v}_i$ ($i=1, 2, 3$) denotes a lift of the three vertices of the triangle, i.e., ${\bf{v}}_i=\n_{i+1}\boxtimes\n_{i+2}$. A direct computation
yields 
$$
{\bf{v}}_1=\left[\begin{matrix}
 \alpha^2-|\beta|^2\\
 \beta^2-\alpha\bar{\beta}\\
 -\alpha\beta+\bar{\beta}^2
\end{matrix}\right],\quad
{\bf{v}}_2=\left[\begin{matrix}
   -\alpha\beta+\bar{\beta}^2\\
 \alpha^2-|\beta|^2\\
 \beta^2-\alpha\bar{\beta}
\end{matrix}\right],\quad
{\bf{v}}_3=\left[\begin{matrix}
  \beta^2-\alpha\bar{\beta}\\
  -\alpha\beta+\bar{\beta}^2\\
 \alpha^2-|\beta|^2
\end{matrix}\right].
$$
%and all of them lie in the complex hyperbolic space.

%We denote $tr(R_1J)$ by $\tau$, i.e. $\tau=a\bar{\omega}(u^2-\bar{u}).$ 
In what follows, we investigate the subgroups of triangle lattices $\mathcal{S}(p, \tau)$ (see Table 1), which fix complex geodesic $L_1$ by considering fundamental domains of the triangle lattices one by one. We refer to Appendix in \cite{DPP2} for the details of the explicit presentations of triangle lattices $\mathcal{S}(p, \tau)$ and the combinatorial invariant shells.
We wish to emphasize that the invariant shells are the side representatives of the fundamental domains for the complex hyperbolic lattices.
%we will investigate all the elements, which fix complex geodesic $L_1$. There are actually infinity elements for each sporadic triangle group, but we just need to consider finite elements by checking the facets of their fundamental domains (please refer the details to \cite{DPP2, DPP}).

%the representation of a Fuchsian subgroup fixing $L_{1}$ one by one case. For each representation of complex hyperbolic lattice in sporadic triangle group or non-equilateral triangle group, please refer the details to \cite{DPP2}.

%%%%%%%%%%%%%%%%%%%%%%%%%%%%%%%%%%%%%%%%\UTF{00B5}\UTF{00DA}\UTF{00D2}\UTF{00BB}\UTF{00D6}\UTF{00D6}\UTF{00C7}\UTF{00E9}\UTF{00D0}\UTF{00CE}  \sigma1
\vskip1\baselineskip
\noindent\textbf{(\expandafter{\romannumeral1})} $\tau=-1+i\sqrt{2}.$

The triangle lattice $\Gamma$ is generated by $R_1, R_2, R_3, J$, explicitly
\begin{equation}\label{rep-s1}
\begin{aligned}
\langle R_1, R_2, R_3, J: \, &R_1^p, J^3, (R_1J)^8, R_3=JR_2J^{-1}=J^{-1}R_1J, (R_1R_2)^{|\frac{3p}{p-3}}|,
br_3(R_1, R_2R_3R_2R_3^{-1}R_2^{-1}),\\
& br_6(R_1, R_2), br_4(R_1, R_2R_3R_2^{-1}),(R_1R_2R_3R_2^{-1})^{|\frac{4p}{p-4}|}, 
br_3(R_1, R_3^{-1}R_2^{-1}R_3R_2R_3)\rangle
\end{aligned}
\end{equation}
Throughout the paper, relations involving infinite exponents shall be removed from the presentation.
In the form of a list of side representatives of  $\Gamma$'s fundamental domain, the rough structure of the invariant shells is given by 
\begin{equation}\label{shell1}
[6]\,1; 2, 3; [4] \,2; 1, 23\bar{2}; [3]\, 23\bar{2};1, 232\bar{3}\bar{2}; [3]\, 232\bar{3}\bar{2}; 1, \bar{3}\bar{2}323,
\end{equation}
where $[k] \,a; b, c$ denotes a $k$-gon pyramid with base $L_a$ (which is fixed by element $a$). In Figure 1, we give a rough picture of $[6]\,1; 2, 3$.
%Then one easily get that $2(x_0)=x_2$
\begin{figure}[h]\label{pya}
\includegraphics[height=1.8in]{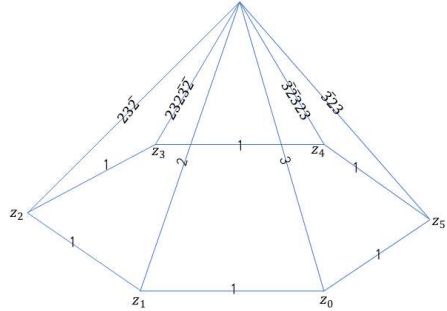}
\caption{Pyramid corresponding to $[6]\,1; 2, 3$ with the base $L_1$ fixed by the complex reflection $R_1$. Note that 
$\bar{3}\bar{2}\bar{3}2323=232\bar{3}\bar{2}$ is a consequence of the braid relation $br_6(R_1, R_2).$}
\end{figure}

Here each vertex ${\bf z}_i$ is the intersection point of the lateral edge with the base edge $L_1$, therefore usually the formula of the vertices can be written in this form:  ${\bf z}_1=\n_1\boxtimes\n_2,$ ${\bf z}_2=\n_1\boxtimes R_2(\n_3)$ and so on. However, one should note that the form of each vertex of such a pyramid depends on $p$; for example, the vertex ${\bf z}_2$ (the intersection point of  $R_2(L_3)$ with $L_1$) will be slightly changed when $p=6.$ One can check that $\n_1\boxtimes R_2(\n_3)$ is a positive vector 
%not lying in $\overline{\HC}.$ 
which is also a polar vector of the common perpendicular complex geodesic $L_{123\bar{2}}$ to $L_1$ and $R_2(L_3).$
Actually, the point ${\bf z}_2$ will be $\n_1\boxtimes(\n_1\boxtimes 2(\n_3)).$ 

%CHOOSE the shells which has the base $L_1$.
We take elements of the triangle lattice such that each of the four shells in (\ref{shell1}) to a pyramid with the same base $L_1$. Note that all of them indeed exist among their tessellation to the complex hyperbolic plane.  We leave $[6]\,1; 2, 3$ invariant and do minor surgeries to the other three pyramids such that each of them has  base $L_1$. Firstly, we consider the action of  the element $J^{-1}$  on the pyramid $[4] \,2; 1, 23\bar{2}$ with  base $L_2.$ Then one can get a new pyramid with base $J^{-1}(L_2)$ fixed by $J^{-1}2J$. The new pyramid is identified with $[4] \,J^{-1}2J; J^{-1}1J, J^{-1}23\bar{2}J$ which can be written as $[4] \, 1; 3, 12\bar{1}$ due to $R_{j+1}=JR_jJ^{-1}.$ 
Similarly, we deform the other two pyramids in (\ref{shell1}) to be with the same base $L_1:$ 
\begin{align*}
&[4] \,2; 1, 23\bar{2}\stackrel{J^{-1}}{\longrightarrow}[4] \, 1; 3, 12\bar{1}\\
&[3]\, 23\bar{2};1, 232\bar{3}\bar{2}\stackrel{J}{\longrightarrow}[3]\,31\bar{3}; 2, 313\bar{1}\bar{3}
\stackrel{\bar{3}}{\longrightarrow}[3]\, 1; \bar{3}23, 13\bar{1}\\
&[3]\, 232\bar{3}\bar{2}; 1, \bar{3}\bar{2}323\stackrel{J^{-1}}{\longrightarrow}[3]\,121\bar{2}\bar{1};3, \bar{2}\bar{1}212
\stackrel{\bar{2}\bar{1}}{\longrightarrow}[3]\,1;\bar{2}\bar{1}312, 12\bar{1}\\
\end{align*}
Along the same base $L_1$, we paste the four pyramids
\begin{equation}\label{sig1}
[6]\,1; 2, 3; [4] \, 1; 3, 12\bar{1}; [3]\, 1; \bar{3}23, 13\bar{1}; [3]\,1;\bar{2}\bar{1}312, 12\bar{1},
\end{equation}
%which have hexagonal base, square base, triangular base and triangular base  respectively. %Because we would like to find the Fuchsian group fixing complex geodesic $L_1,$Now we only need to pay attention to the bases in $L_1,$ which contains  
and mainly focus on the obtained decagon which lies in the closure of complex geodesic $L_1$ (homeomorphic to $\overline{{{\textbf{H}_{\mathbb{C}}^1}}}$). Its vertices are the intersection points of complex geodesic $L_1$ with other complex geodesics. Generally, the decagon $F$ (see Figure 2) has vertices $x_j={\bf{P}}(\n_1\boxtimes \a_j)$ ($j=0,1,\cdots, 9$) where 
\begin{figure}[htp]\label{fig:F.D1}
\includegraphics[height=2.5in]{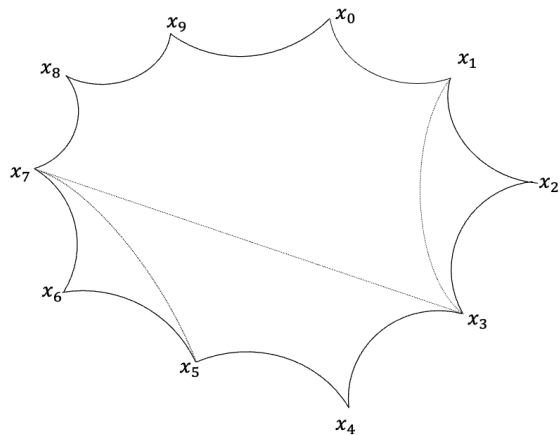}
\caption{The decagon $F$}
\end{figure}
\begin{alignat*}{4}
&\a_0=\bar{3}\bar{2}\n_3,\,
\a_1=\bar{3}\n_2, \quad
\a_2=\bar{3}231\n_3, \,
\a_3=\n_3,\, 
 \a_4=31\n_2,\,\\
 &\a_5=\bar{2}\bar{1}\n_3,\,
 \a_6= \bar{2}\bar{1}3121\n_2, \,
\a_7=\n_2,\, 
\a_8= 2\n_3, \,
 \a_9=23\n_2.\\
\end{alignat*}

Here $x_0$ just denotes the intersection point of $L_1$ with $R_3^{-1}R_2^{-1}(L_3)$ (the point fixed by the complex reflection  $\bar{3}\bar{2}323$), i.e., $x_0$ is the fixed point of $1\bar{3}\bar{2}323.$ Just as we stated previously, one should note that the formulas of the vertices above depend on $p.$ For example, the formula for the point $x_7$ (the intersection point of $L_1$ with $L_2$) will be ${\bf{P}}(\n_1\boxtimes(\n_1\boxtimes\n_2))$ when $p=4,$ which lies in the complex geodesic $L_1$ and is fixed by $12.$

From the combinatorics of the four pyramids (\ref{sig1}), we know that the decagon composes of  a hexagon $P_1$ with vertices $x_0,\, x_1,\, x_3,\, x_7,\, x_8,\, x_9$,
a quadrilateral $P_2$ with vertices $x_3,\, x_4,\, x_5,\, x_7,$ a triangle  $P_3$ with vertices $x_1,\, x_2,\, x_3$ and 
a triangle  $P_4$ with vertices $x_5,\, x_6,\, x_7$, i.e., it comprises ten sides $l_1,l_2,\cdots l_{10},$ where
\begin{equation}\label{sig1-s}
l_{i}={\bf{P}}({\rm{Span}}_\C\{x_{i-1}, x_i\})\cap L_1.
\end{equation}
% where all of them are pairwise distinct and obviously lie in the complex geodesic $L_1$.
 
In order to find the explicit structure of the Fuchsian group stabilising $L_1$, we start from the side pairing transformations for the decagon. One can separately  consider the transformations which convert vertices from the construction of above four polygons. Note that the element $\bar{3}23$ transfers the complex geodesic $L_{\bar{3}\bar{2}323}$ fixed by $\bar{3}\bar{2}323$ to the complex geodesic $L_{3}$ fixed by $R_3$ when focusing on the hexagon $P_1.$ Also, the element $\bar{3}23$ transfers the complex geodesic $L_{3}$ to the complex geodesic $L_{\bar{3}2313\bar{1}\bar{3}\bar{2}3}$ when focusing on the triangle $P_3.$ Then we consider the element $(1\bar{3}23)^2$ which fixes the vertex $x_1$ and obtain that $(1\bar{3}23)^2(l_1)=l_2.$ In the same manner, one can get that $(13)^3$ fixes $x_3$ and maps $l_3$ to $l_4$; $ (12)^3$ fixes $x_7$ and maps $l_7$ to $l_8.$  Now, let $g_1=(1\bar{3}23)^2,\, g_2=(13)^3,\, g_3=(12)^3.$ One can know that
$$g_1(l_1)=l_2,\, g_2(l_3)=l_4,\, g_3(l_7)=l_8.$$

The difficulty here is pairing the remaining four sides $l_5,\, l_6,\, l_9,\, l_{10}.$ We have that $(12)^3(x_6)$ $=x_8,$ and  pay attention to the stabiliser $123\bar{2}$ of $x_8.$ Denote $(123\bar{2})^2(12)^3$ by $ g_4,$ one can immediately get that $g_4(x_6)=x_8.$ We claim that $g_4(x_5)=x_9.$
Note that $(123)^3=1J,$ because the order of $1J$ is 8 and $(1J)^3=123.$ Using ${\textrm {br}}_6(1, 2),$ we have
\begin{align*}
&\bar{1}^2(123\bar{2})^2(12)^3(\a_5)\\
= &\bar{1}^2(123\bar{2})^2(212121)\bar{1}(\bar{2}\bar{1}\,\n_3)\\
=&\bar{1}23\bar{2}123123\,\n_3\\
=&\bar{1}23\bar{2}\bar{3}\bar{2}J\,\n_3\\
=&\bar{1}23\bar{2}\bar{3}\bar{2}\,\n_1.
\end{align*}
Due to ${\rm{br}}_3(1,232\bar{3}\bar{2}),$ we see at once that $\bar{1}23\bar{2}\bar{3}\bar{2}1232\bar{3}\bar{2}1=232\bar{3}\bar{2},$ which means
$\bar{1}23\bar{2}\bar{3}\bar{2}\,\n_1=23\,\n_2=\a_9.$ Therefore, we obtain that $g_4(l_6)=l_9.$ 
%A direct computation yields that $(123\bar{2})^2\circ (12)^3(x_6)=x_8$ and $(123\bar{2})^2\circ (12)^3(x_5)=x_9,$ i.e.,$$(123\bar{2})^2\circ (12)^3(l_6)=l_9.$$Similarly, one can get that $(1232\bar{3}\bar{2})^3\circ (123\bar{2})^2\circ (12)^3(l_5)=l_{10}.$

Let $g_5=(1232\bar{3}\bar{2})^3\circ g_4=(1232\bar{3}\bar{2})^3(123\bar{2})^2(12)^3.$ It follows immediately that $g_5(x_5)=x_9,$ because $g_4(x_5)=x_9$ and $1232\bar{3}\bar{2}$ fixes $x_9.$ We claim that $g_5(x_4)=x_0$ by checking  
$\bar{1}^3g_5(\bar{1}\a_4)=1\a_0.$ Considering the braid relations in the group presentation (\ref{rep-s1}), we have
\begin{align*}
&\bar{1}^3(1232\bar{3}\bar{2})^3(123\bar{2})^2(12)^3(\bar{1}\a_4)\\
=&\bar{1}^3(1232\bar{3}\bar{2})^3(23\bar{2}1)^2(21)^3(\bar{1}31\n_2)\\
=&\bar{1}232\bar{3}\bar{2}1(123)^2\bar{2}(12)^3(31\n_2)\\
=&\bar{1}232\bar{3}\bar{2}1(\bar{3}\bar{2}J)\bar{2}(21)^3(31\n_2)\\
%=&\bar{1}232\bar{3}\bar{2}1\bar{3}\bar{2}J\bar{2}(212121)31\n_2\\
=&\bar{1}232\bar{3}\bar{2}1\bar{3}\bar{2}J1212131\n_2\\
%=&\bar{1}232\bar{3}\bar{2}1\bar{3}\bar{2}(2323212J)\n_2\\
=&\bar{1}(232\bar{3}\bar{2}1232\bar{3}\bar{2})2312J\n_2\\
%=&\bar{1}(1232\bar{3}\bar{2}1)23123\n_3\\
=&232\bar{3}\bar{2}(123)^2\n_3\\
=&232\bar{3}\bar{2}\bar{3}\bar{2}J\n_3\\
%=&(232)\bar{3}\bar{2}\bar{3}\bar{2}\n_1\\
=&\bar{3}\bar{2}\bar{3}(23)^3(\bar{3}\bar{2})^2\n_1\\
=&\bar{3}\bar{2}\bar{3}23\n_1.
\end{align*}
It is easy to know that $\bar{3}\bar{2}\bar{3}23\n_1=1\bar{3}\bar{2}\n_3=1\a_0,$ because of $(\bar{3}\bar{2}\bar{3}23)1(\bar{3}\bar{2}323)=1\bar{3}\bar{2}323\bar{1}$ from ${\rm {br}}_3(1, \bar{3}\bar{2}323).$  Therefore $g_5(x_4)=x_0,$ i.e., $g_5(l_5)=l_{10}.$

We consider the $\mathbb{C}$-Fuchsian subgroup $\Gamma_0$ generated by $g_1, g_2, g_3, g_4, g_5,$
and only need to check the local tiling condition near every vertex of the decagon $F$ in Figure 2 to show that it is indeed the fundamental domain of the $\mathbb{C}$-Fuchsian subgroup $\Gamma_0$. It is sufficient to consider the three cycles: $\{x_0,\, x_2,\, x_4\},\,\{x_5,\, x_9\},\,\{x_6,\, x_8\}.$ We would like to take the vertex $x_8$ when $p=3$ for example.  It is easily seen that the order of the stabiliser $g_3\circ g_4^{-1}$ in $\Gamma_0$ of the vertex $x_8$ is 6 by considering its eigenvalues and eigenvectors. Because the invariant shells (\ref{shell1}) are the side representatives of the fundamental domain of the complex hyperbolic lattice $\Gamma,$ the domains $(g_3\circ g_4^{-1})^m(F)$ do not intersect with each other for $m=1,\, 2,\, 3,\, 4,\, 5.$  Let $\theta_1,$ $\theta_2$  be the internal angles of $x_6,\, x_8$ respectively. Using the Cosine Rule  (\ref{Cosine}) for the triangle with vertices $g_3(x_5),\, x_8,\, x_7$ lying in the complex geodesic $L_1$ (an embedded copy of ${\bf H}^1_\mathbb{C}$), one can get that $\theta_1+\theta_2$ is exactly $2\pi/6.$ We conclude, therefore, that the local tiling condition is satisfied for the vertex $x_8$ when $p=3.$ In the same way, one can check the local tiling condition of all vertices for $p=3,\, 4,\, 6$ by considering the relation of the sum of angles at all elliptic vertices belonging to an elliptic cycle with the order of that cycle. In particular, by computing the the angle of the elliptic cycle, we could get the order of the three cycle transformations at $x_0,$ $x_5,$ $x_8$ respectively. Finally we obtain that  the decagon $F$ is the fundamental domain of  the $\mathbb{C}$-Fuchsian subgroup $\Gamma_0$ by Theorem \ref{thm:tool}.

From  the presentation (\ref{rep-s1}) of the triangle lattice. We see at once that both $g_2$ and $g_3$ is of order $|\frac{p}{p-3}|.$ Recalling the element $P^2=1J1J,$ we have
\begin{align*}
&P^21P^{-2}=1J1J1J^{-1}\bar{1}J^{-1}\bar{1}=1(23\bar{2})\bar{1},\\
&P^2(\bar{3}23)P^{-2}=1J1J\bar{3}23J^{-1}\bar{1}J^{-1}\bar{1}=1,
\end{align*}
which yields that the order of $g_1=(1\bar{3}23)$ is $|\frac{2p}{p-4}|.$
One can obtain the normal representations of the $\mathbb{C}$-Fuchsian subgroup $\Gamma_0$ for $p=3, 4, 6:$
\begin{itemize}
\item$p=3,$ $4:$
\[
\langle g_1, g_2, g_3, g_4, g_5:  g_1^{|\frac{2p}{p-4}|}, \, g_2^{|\frac{p}{p-3}|},\, g_3^{|\frac{p}{p-3}|},
 (g_5\circ g_2\circ g_1)^{|\frac{2p}{p-2}|}, \, (g_5\circ g_4^{-1})^{|\frac{2p}{p-4}|}, \, (g_4\circ g_3^{-1})^{|\frac{2p}{p-4}|}\rangle,
\]
\\
%\begin{figure}
 % \includegraphics[width=10cm]{tu}
  %\caption{rough fundamental domain}
%\end{figure}
\item $p=6:$
\[
\langle g_1, g_2, g_3, g_4, g_5:  g_1^6, \, g_2^2, \,  g_3^2, \, (g_4\circ g_3^{-1})^6\rangle.
\]
where $g_5\circ g_2\circ g_1,$ $(g_5\circ g_4^{-1})$ are ellipto-parabolic elements. In this case $x_0,$ $x_5$ lie in the boundary of the disk.
%are all complex reflections in complex geodesics, $E$ is a loxodromic element.
\end{itemize}
\vskip1\baselineskip
\noindent\textbf{(\expandafter{\romannumeral2})} $\tau=-\frac{1+i\sqrt{7}}{2}.$ 

The triangle lattice $\Gamma$ is generated by $R_1, R_2, R_3, J$, explicitly
\begin{equation}\label{rep:s4}
\begin{aligned}
\langle R_1, R_2, R_3, J: \, &R_1^p, J^3, (R_1J)^7, R_3=JR_2J^{-1}=J^{-1}R_1J, (R_1R_2)^{|\frac{4p}{p-4}|}, \\
& br_4(R_1, R_2), (R_1R_2R_3R_2^{-1})^{|\frac{6p}{p-6}|}, br_3(R_1, R_2R_3R_2^{-1}) 
\rangle
\end{aligned}
\end{equation}
The rough structure of the invariant shell is given by
$$[4]\, 1;\, 2,\, 3; \quad [3]\, 2;\, 1,\, 23\bar{2}.$$
The element $J^{-1}$ maps the shell $[3]\, 2;\, 1,\, 23\bar{2}$ to $[3]\, 1;\, 3,\, 12\bar{1}.$ Pasting the two shells
\begin{equation}\label{sig4}
[4]\, 1;\, 2,\, 3; \quad [3]\, 1;\, 3,\, 12\bar{1}
\end{equation}
 along the bases in $L_1,$ we get a pentagon (see Figure 3) with vertices $x_j={\bf{P}}(\n_1\boxtimes \a_j)$ where
\begin{align*}
&\a_1= 2\n_3, \quad \a_2= \bar{3}\n_2,\quad \a_3=\n_3,\quad \a_4= 31\n_2, \quad \a_5=\n_2.
\end{align*}
The pentagon composes of a tetragon $P_1$ with vertices $x_1, x_2, x_3, x_5$ and a triangle $P_2$ with vertices $x_3, x_4, x_5.$ 
\begin{figure}[htp]\label{F.D.2}
\includegraphics[height=1.8in]{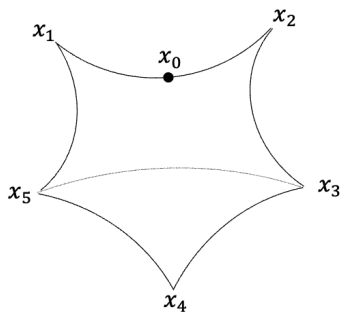}
\caption{The polygon $F$  }
\end{figure}
%where $x_1, x_2, x_3$ are pairwise distinct and lie in $\partial\HC.$
%Recalling the singular points existed in \cite{DPP2}, 
We restrict to the singular point $x_0$ fixed by $23\bar{2}P^2$  (where $P=R_1J$). A direct computation yields that $x_0$ lies in the  geodesic spanned by $x_1, x_2$ and $23\bar{2}P^2$ maps $x_1$ to $x_2.$ Indeed, using $(23\bar{2}121)2(\bar{1}\bar{2}\bar{1}2\bar{3}\bar{2})=(23)2(\bar{3}\bar{2}),$ we have
\begin{align*}
23\bar{2}P^2&(\a_1)=23\bar{2}1J1J(2\n_3)=23\bar{2}121\n_2=23\n_2=\bar{3}\n_2=\a_2.
\end{align*}
%because $(23\bar{2}121)2(\bar{1}\bar{2}\bar{1}2\bar{3}\bar{2})=(23)2(\bar{3}\bar{2}).$
Thence we consider a hexagon comprising the following sides
\begin{equation}\label{sig4-s}
l_{i}=\left\{
             \begin{array}{ll}
             {\bf{P}}({\rm{Span}}_\C\{x_{i}, x_0\})\cap L_1,  &\quad i=1, 2.  \\
             \\
             {\bf{P}}({\rm{Span}}_\C\{x_{i-1}, x_i\})\cap L_1, &\quad i=3, 4, 5.  
             \end{array}
\right.
\end{equation}
By the composition of this hexagon, one could check that the second power of the element $12$ maps $x_4$ to $x_1$. Then we get that $(12)^2(l_5)=l_6$ because of  $(12)^2(x_4)=x_1$ and $(12)^2(x_5)=x_5.$ 
Similarly, it is easy to check that $(13)^2(l_3)=l_4.$ Furthermore, it follows from $23\bar{2}P^2(x_0)=x_0,\, 23\bar{2}P^2(x_1)=x_2$ that $23\bar{2}P^2(l_1)=l_2.$   Now we have paired the sides of the pentagon drawn above. For each vertex of the cycle $\{ x_1,\, x_2,\, x_4\},$ one can verify that it satisfies  local tiling condition by considering the sum of angle of all vertices and the order of its stabiliser.
%\tau=-\frac{1+i\sqrt{7}}{2}  (12)^2,(13)^2,23\bar{2}P^2

By Theorem \ref{thm:tool}, we obtain the fundamental domain in $L_1$ (which is the hexagon $F$ in Figure 3) of the $\mathbb{C}$-Fuchsian subgroup generated by $g_1, g_2, g_3,$ where
$$g_1=(12)^2,\quad g_2=23\bar{2}P^2, \quad g_3=(13)^2.$$
 %, which can be checked by directly checking the  {\textcolor{red}{eigenvalues and eigenvectors traces}}.\\
%By the relation shown in the triangle lattice, we could write the representation of a Fuchsian subgroup fixing $L_{1}$ in what follows, also obtain the corresponding fundamental domain.
We are now in a position to show the presentation of this $\mathbb{C}$-Fuchsian subgroup for all values of $p.$
Because the order of $1J$ is 7, it follows from $P=1J$ and $(1J)^3=123$ that $P^2=P^{-5}=P^{-2}\bar{3}\bar{2}\bar{1}=J^{-1}\bar{1}J^{-1}\bar{1}\bar{3}\bar{2}\bar{1}.$ We have that
\[
g_2^2=(23\bar{2}1J1J)(23\bar{2}J^{-1}\bar{1}J^{-1}\bar{1}\bar{3}\bar{2}\bar{1})
%=23\bar{2}1J1J^{-1}J^{-1}23\bar{2}JJ\bar{1}J^{-1}\bar{1}\bar{3}\bar{2}\bar{1}
=23\bar{2}1212\bar{1}\bar{2}\bar{1}\bar{3}\bar{2}\bar{1}=\bar{1},
\]
which shows that the order of $g_2$ is $2p.$ From the braid relation ${\rm {br}_4}(1, 2)$, one can easily see that ${\rm {br}_4}(3, 1).$ Then we get that the order of $g_3$ is $|\frac{2p}{p-4}|$ because the elements $12,$ $23,$ and $31$ have the same order $|\frac{4p}{p-4}|.$ What is left is to consider the order of the elliptic cycle. 
Note that $P^2=(1J)^{-12}=(123)^{-4}=(\bar{3}\bar{2}\bar{1})^4.$ Using the relations in the presentation (\ref{rep:s4}), we have
\begin{align*}
g_1\circ g_3\circ g_2
=& 1^2(12)^3\bar{1}^2(13)^2(23\bar{2}P^2)\\
=&1^3212\bar{1}31(323\bar{2}\bar{3}\bar{2})\bar{1}(\bar{3}\bar{2}\bar{1})^3\\
=&1^3212\bar{1}31\bar{2}(3\bar{1}\bar{3})\bar{2}\bar{1}(\bar{3}1\bar{1}\bar{2}\bar{1}\bar{3}\bar{2}\bar{1})\\
=&1^32(12\bar{1}31\bar{2}\bar{1}\bar{3})(\bar{1}31\bar{2}\bar{1}\bar{3}1)\bar{1}\bar{2}\bar{1}\bar{3}\bar{2}\bar{1}\\
%=&1^32(\bar{3}12\bar{1})(\bar{2}\bar{1}\bar{3}12)\bar{1}\bar{2}\bar{1}\bar{3}\bar{2}\bar{1}\\
=&1^32\bar{3}(12\bar{1}\bar{2}\bar{1})\bar{3}(12\bar{1}\bar{2}\bar{1})\bar{3}\bar{2}\bar{1}\\
=&1^3(2\bar{3}\bar{2}\bar{1})^3
\end{align*}
which indicates the order of $g_1\circ g_3\circ g_2$ is $|\frac{2p}{p-6}|,$ since the order of $123\bar{2}$ is 
$|\frac{6p}{p-6}|$ and $1^3(2\bar{3}\bar{2}\bar{1})^3=(2\bar{3}\bar{2}\bar{1})^{3}1^3.$
%We elaborate the structure of the Fuchsian groups for $p=3, 4, 5, 6, 8, 12$ in what follows.
We get the following presentation of the $\mathbb{C}$-Fuchsian subgroup for $\tau=-\frac{1+i\sqrt{7}}{2}:$
\[
\langle g_1,\, g_2,\, g_3\,: g_1^{|\frac{2p}{p-4}|},\, g_2^{2p},\, g_3^{|\frac{2p}{p-4}|},\,
( g_1\circ g_3\circ g_2)^{|\frac{2p}{p-6}|}  \rangle.
\]

\vskip1\baselineskip
\noindent\textbf{(\expandafter{\romannumeral3})} $\tau=\frac{1+\sqrt{5}}{2}.$

The triangle lattice $\Gamma$ is generated by $R_1, R_2, R_3, J$, explicitly
\begin{align*}
\langle R_1, R_2, R_3, J: \, &R_1^p, J^3, (R_1J)^5, R_3=JR_2J^{-1}=J^{-1}R_1J,\, {\rm br}_5(R_1, R_2),\\ &(R_1R_2)^{|\frac{10p}{3p-10}|},\, {\rm br}_3(R_1, R_2R_3R_2^{-1}),(R_1R_2R_3R_2^{-1})^{|\frac{6p}{p-6}|}\rangle
\end{align*}
We consider the combinatorics of the fundamental domain for this triangle lattice which comprises the following two invariant shells
$$[5]\, 1;\, 2,\, 3;\quad [3]\, 2;\, 1,\, 23\bar{2}.$$

Following the process of the previous cases, we firstly map the pyramid $[3]\, 2;\, 1,\, 23\bar{2}$ to $[3]\, 1;\, 3,\, 12\bar{1}$  by the action of the element $J^{-1}.$ We paste the two pyramids 
\begin{equation}\label{sig10}
[5]\, 1;\, 2,\, 3; \quad [3]\, 1;\, 3,\, 12\bar{1}
\end{equation}
along the bases in $L_1.$ Then we get a hexagon in Figure 4 with the vertices $x_j={\bf P}({\bf{n_1}}\boxtimes \a_j)$ ($j=0,1,\cdots5$), where
\begin{align*}
\a_0=\bar{3}\bar{2}{\bf{n_3}},\quad \a_1= \bar{3}{\bf{n_2}},\quad \a_2= \n_3,
\quad\a_3=31{\bf{n_2}},\quad \a_4=\n_2,\quad \a_5=2{\bf{n_3}}.
\end{align*}
%where all of them are pairwise distinct and lie in $\HC.$ 
The hexagon composes of  a pentagon $P_1$ with vertices $x_0, x_1, x_2, x_4, x_5$ and a triangle $P_2$ with vertices $x_2, x_3, x_4.$ We write the sides of the hexagon $F_2$ as follows
\begin{equation}\label{sig10-s}
l_{i}={\bf{P}}({\rm{Span}}_\C\{x_{i-1}, x_i\})\cap L_1,\quad i=1,2, \cdots, 6.
\end{equation}
Let $g_1=1\bar{3}\bar{2}323,$ $g_2=1312\bar{1}\bar{3}$.
\begin{figure}[htp]\label{F.D.3}
\includegraphics[height=1.9in]{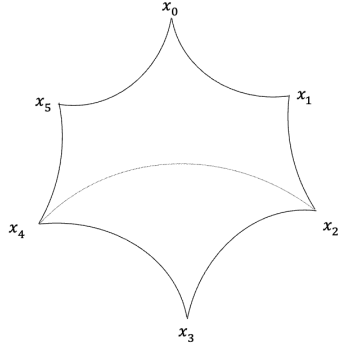}
\caption{The hexagon $F_2$ }
\end{figure}
A trivial verification shows that 
$$g_1(l_6)=l_1,\quad g_2(l_3)=l_4,$$
since $g_1(x_0)=x_0,\, g_1(x_5)=x_1, \, g_2(x_3)=x_3,\, g_2(x_2)=x_4.$ 
%by considering the explicit matrix representation of  $R_1, R_2, R_3.$ 
About the sides $l_5$ and $l_2$, we firstly apply the element $g_1$ to map $l_5$ to a geodesic $l$, one of whose endpoint is $x_1$ because of $g_1(x_5)=x_1$. Due to the construction of the fundamental domain for the triangle lattice $\Gamma$ and $1\bar{3}23$ fixes $x_1$, a direct calculation yields that $(1\bar{3}23)^3\cdot g_1$ maps $x_5, x_4$ to $x_2, x_1$ respectively.  Therefore, we know that 
$g_3(l_5)=l_2,$  where $g_3=(1\bar{3}23)^3\circ g_1.$  One could finally check the local tiling condition for the vertices of the two cycles: 
$\{ x_1,\, x_5\},$ $\{ x_2,\, x_4\}$ which follows by the same method as in the first case 
$\tau=-1+i\sqrt{2}.$
%$\tau=\frac{1+\sqrt{5}}{2} 1\bar{3}\bar{2}323, 1312\bar{1}\bar{3}, (1\bar{3}23)^3\cdot 1\bar{3}\bar{2}323$

We claim that for any holomorphic isometry $g$ fixing the complex geodesic $L_1,$ $g$ commutes with $R_1.$ Indeed, it follows $g(\n_1)=\n_1$ that $gR_1g^{-1}=R_1.$ i.e., $gR_1=R_1g.$ Similarly, $g$ also commutes with $R_1^{-1}.$ Note that $1J=(1J)^6=123123$ implies that $J=23123=31231=12312,$ also 
$(1J)^2=(1J)^{-3}=\bar{3}\bar{2}\bar{1}.$ Now we check the order of elliptic cycles at $x_2$ and $x_5$:
\begin{align*}
g_3\circ g_2&=(1\bar{3}23)^3(1\bar{3}\bar{2}323)(1312\bar{1}\bar{3})\\
&=1^3(1\bar{3}231\bar{3}231\bar{3}23)(\bar{3}\bar{2}323)(312\bar{1}\bar{3}\bar{1})\\
&=1^5\bar{1}\bar{3}\bar{1}\bar{3}(31231)\bar{3}(23123)(31231)\bar{1}\bar{3}\bar{1}\bar{3}\bar{1}\\
&=1^5(\bar{1}\bar{3})^2J\bar{3}J^2(\bar{3}\bar{1}\bar{3}\bar{1}\bar{3})\\
&=1^5(\bar{1}\bar{3})^5,\\
g_1^{-1}\circ g_3&=(1\bar{3}\bar{2}323)^{-1}(1\bar{3}23)^3(1\bar{3}\bar{2}323)\\
&=1^2(\bar{3}\bar{2}\bar{3}23)\bar{1}\bar{3}231\bar{3}23123\\
&=1^223\bar{2}(\bar{3}\bar{2}\bar{1})\bar{3}231\bar{3}(23123)\\
&=1^223\bar{2}1J1J^{-1}J^{-1}\bar{3}231\bar{3}J\\
&=1(123\bar{2}1)123\bar{2}\\
&=(123\bar{2})^3,
\end{align*}
which yield that the order of $g_3\circ g_2$ is $|\frac{2p}{3p-10}|,$ and the order of $g_1^{-1}\circ g_3$ is $|\frac{2p}{p-6}|.$
By Theorem \ref{thm:tool}, we obtain the fundamental domain in $L_1$ (which is a hexagon) of the $\mathbb{C}$-Fuchsian group generated by $g_1, g_2, g_3.$ 
We list the explicit presentation for all values of $p=3, 4, 5, 10:$
$$\langle g_1, \, g_2, \, g_3: g_1^p, \, g_2^p, \, (g_3\circ g_2)^{|\frac{2p}{3p-10}|}, \, (g_3^{-1}\circ g_1)^{|\frac{2p}{p-6}|} \rangle.$$
Now the proof is complete.
\end{proof}

%\emph{Remark 1.} We should note that the expression of the vertices for each fundamental domain existed above may need to be slightly changed necessarily, such as the case for $\tau=\sigma_1=-1+i\sqrt{2}$ and $p=4$. The complex geodesic $L_1$ will not intersect the complex geodesic $L_2$ in $\overline{\HC},$ then the  point $\n_1\boxtimes\n_2$ is the polar vector of a complex geodesic $L$ orthogonal to $L_1$ and $L_2$. In such case, we shall consider $f_c=(\n_1\boxtimes\n_2)\boxtimes\n_1$ which lies in the complex geodesic $L_1$ and is also fixed by $(R_1R_2)^3$ due to direct calculation.

% and is also fixed by $R_1R_2.$
%\emph{Remark 2.}
%Throughout this paper, we investigate the $\C$-Fuchsian subgroups fixing a complex geodesic. One should note that there exist conjugacy classes given by any element in the complex hyperbolic lattice. For example, we also could get a $\C$-Fuchsian subgroups  in $\mathcal{S}(3, \sigma_1)$ stabilising the complex geodesic $L_{21\bar{2}}$:
%$$\big\langle   2\circ(1\bar{3}23)^2\circ\bar{2}, 2\circ(13)^3\circ\bar{2}, 2\circ(12)^3\circ\bar{2}, 2\circ(123\bar{2})^2\circ\bar{2}, 2\circ(1232\bar{3}\bar{2})^3\circ (123\bar{2})^2\circ (12)^3\circ\bar{2} \big\rangle.$$

\begin{rem}\label{rem:Thom}
By an almost similar method, one can also consider the structure of $\mathbb{C}$-Fuchsian subgroups in Thompson triangle groups for $S_2$ and $E_2$ in Table 2. Recall the matrix normalisation (\ref{Thom:mat}) for Thompson triangle groups in section 3.2.  We describe these two cases roughly in what follows. We stress that calculations can be done in the same manner with the proof above and the situation is improving significantly when one uses Mathematica for example.
\end{rem}
%\vskip1\baselineskip
%\noindent\textbf{(\expandafter{\romannumeral1})} 
\noindent\textbf{(\expandafter{\romannumeral1})} Thompson triangle group $S_2.$  
\begin{align*}
\langle R_1, R_2, R_3: &R_1^p, R_2^p, R_3^p, (R_1R_2R_3)^5, {\rm{br}}_3(R_1, R_3),
 {\rm{br}}_3(R_2, R_3),\\& {\rm{br}}_4(R_1, R_2),
 (R_1R_2)^{|\frac{4p}{p-4}|}, {\rm{br}}_5(R_1, R_2R_3R_2^{-1}), (R_1R_2R_3R_2^{-1})^{|\frac{10p}{3p-10}|}  \rangle
\end{align*}
It has the following pyramids of the side representatives of its fundamental domain
\[
[3] 1; 2, 3, \quad [5] \,2; 1, 23\bar{2},\quad [4]\, 3;1, 2,\quad [3]\, 23\bar{2}; 1, 3.
\]
We force them to having the same base $L_1:$
\begin{align*}
&[5] \,2; 1, 23\bar{2}\stackrel{23}{\longrightarrow}[5] \, 3; 231\bar{3}\bar{2}, 2\stackrel{\bar{3}\bar{1}}{\longrightarrow}[5] \, 1;\bar{3}\bar{1}231\bar{3}\bar{2}13, \bar{3}\bar{1}213\\
&[4]\, 3;1, 2\stackrel{\bar{3}\bar{1}}{\longrightarrow}[4]\,1; \bar{3}13, \bar{3}\bar{1}213\\
&[3]\, 23\bar{2}; 1, 3\stackrel{\bar{2}}{\longrightarrow}[3]\, 3;\bar{2}12;\bar{2}32\stackrel{\bar{3}\bar{1}}{\longrightarrow}
[3]\, 1; \bar{3}\bar{1}\bar{2}1213, \bar{3}\bar{1}\bar{2}3213
\end{align*}
%Along the same base $L_1$, we paste the four pyramids
%\begin{equation}\label{sig1}
%[3]\,1; 2, 3; [5] \, 1;\bar{3}\bar{1}231\bar{3}\bar{2}13, \bar{3}\bar{1}213;[4]\,1; \bar{3}13, \bar{3}\bar{1}213; [3]\, 1; \bar{3}\bar{1}\bar{2}1213, \bar{3}\bar{1}\bar{2}3213,
%\end{equation}
We pay attention to the nonagon $F$ with vertices $x_j={\bf{P}}(\n_1\boxtimes \a_j)$ ($j=1,\cdots, 9$) as follows: 
\begin{figure}[htp]\label{fig:F.D.S2}
\includegraphics[height=2.5in]{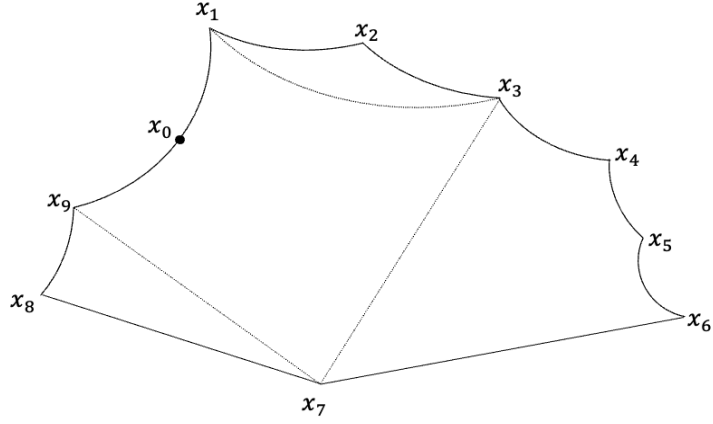}
\caption{The nonagon $F$}
\end{figure}
\begin{alignat*}{4}
&
\a_1=\bar{3}\bar{1}\bar{2}\n_1, \,
\a_2=\bar{3}\bar{1}\bar{2}1\n_3, \,
\a_3=\bar{3}\bar{1}\n_2,\, 
 \a_4=\bar{3}\bar{1}23\n_1,\,
 \a_5=\bar{3}\bar{1}2312\n_3,\,\\
 &\a_6= (\bar{3}\bar{1})^223123\bar{2}\n_1, \,
\a_7=\n_3,\, 
\a_8= \n_2, \,
 \a_9=2\n_3.
\end{alignat*}
Note that the singular point $x_0$ fixed by $\bar{3}\bar{1}Q^{-1}2Q^313$  (where $Q=R_1R_2R_3$),  lies in the  geodesic spanned by $x_1, x_9.$  A direct computation yields that $\bar{3}\bar{1}Q^{-1}2Q^313$ maps $x_1$ to $x_9.$ Thence we consider the decagon comprising the following sides
\begin{equation}\label{sig4-s}
l_{i}=\left\{
             \begin{array}{ll}
             {\bf{P}}({\rm{Span}}_\C\{x_{i-1}, x_i\})\cap L_1, &\quad i=1, 2, \cdots 9.\\
             \\
             {\bf{P}}({\rm{Span}}_\C\{x_{9}, x_0\})\cap L_1,  &\quad i=10.  
             \end{array}
\right.
\end{equation}
In the same manner with the proof of Theorem \ref{thm:main}, one can check that the following side pairing transformations
\[
g_1=(13)^3,\, g_2=(1\bar{3}\bar{1}213)^3,\, g_3=(1\bar{3}\bar{1}231\bar{3}\bar{2}13)^2\circ g_2,\,
g_4=(12)^2\circ g_1,\, g_5=\bar{3}\bar{1}Q^{-1}2Q^313
\]
satisfy
\[
g_1(l_7)=l_8, \,  g_2(l_3)=l_4,\,  g_3(l_2)=l_5, \,  g_4(l_6)=l_9, \, g_5(l_{10})=l_1.
\]
One can check the cycle transformation for $x_4$ satisfying 
$$(g_2\circ g_3^{-1})=1^3(2\bar{3}\bar{2}\bar{1})^2$$
and get the the presentation of $\C$-Fuchsian group  fixing $L_1$  in Thompson group $S_2:$
\[\langle g_1, g_2, g_3, g_4, g_5: g_1^{|\frac{2p}{p-6}|},\, g_2^{|\frac{2p}{p-6}|},\, g_5^{2p},\,  (g_2\circ g_3^{-1})^{|\frac{2p}{p-4}|},\, (g_1\circ g_4^{-1})^{|\frac{2p}{p-4}|}\rangle.
\]
%\begin{thm} \label{thm:thom1} 
%For Thompson triangle group $S_2$, there exist $\C$-Fuchsian subgroups stabilising $L_1$ which have the following structure when $p=3, 4$:\\
%\[\langle g_1, g_2, g_3, g_4, g_5: g_1^{|\frac{2p}{p-6}|},\, g_2^{|\frac{2p}{p-6}|},\, g_5^{2p},\,  (g_2\circ g_3^{-1})^{|\frac{2p}{p-4}|},\, (g_1\circ g_4^{-1})^{|\frac{2p}{p-4}|}\rangle.\]\end{thm}
\noindent\textbf{(\expandafter{\romannumeral2})} Thompson triangle group $E_2.$  
\begin{align*}
\langle &R_1, R_2, R_3: R_1^p, R_2^p, R_3^p, (R_1R_2R_3)^6, {\rm{br}}_3(R_2, R_3),
 {\rm{br}}_4(R_3, R_1), (R_1R_3)^{|\frac{4p}{p-4}|}, {\rm{br}}_4(R_1, R_2),\\
& (R_1R_2)^{|\frac{4p}{p-4}|}, {\rm{br}}_4(R_1, R_2R_3R_2^{-1}), (R_1R_2R_3R_2^{-1})^{|\frac{4p}{p-4}|},   {\rm{br}}_6(R_3, R_1R_2R_1^{-1}), (R_3R_1R_2R_1^{-1})^{|\frac{3p}{p-3}|}\rangle
\end{align*}
We restrict to the pyramids of the side representatives of its fundamental domain
\[
[3] 1; 2, 3, \quad [6] \,\bar{3}13; 12\bar{1}, 3,\quad [4]\, 23\bar{2};1, 3,\quad [4]\, 3; 1, 2,\quad
[4]\, 2;23\bar{2}, 2\bar{3}\bar{2}123\bar{2},
\]
and pay attention to $[6] \,\bar{3}13; 12\bar{1}, 3$. Let $Q=R_1R_2R_3.$ It is easily seen that $Q^3$ acts as a complex reflection with order 2 mapping the opposite vertices to each other. The image of it under $R_3$ is $[6]\,1; 3, 12\bar{1}.$ Now, we firstly consider the pentagon lying in $L_1$ comprising the triangle from $[3]\, 1; 2, 3$ and a quadrilateral which is half of the hexagon from $[6]\, 1; 3, 12\bar{1}.$ 
It has vertices $x_j={\bf{P}}(\n_1\boxtimes \a_j)$ (see Figure 3), where 
%\begin{figure}[htp]\label{fig:F.D.S2}
%\includegraphics[height=2.5in]{F.D.S2.eps}
%\caption{The nonagon $F$}
%\end{figure}
\begin{alignat*}{4}
\a_1=31\n_2, \,
\a_2=\bar{2}\bar{1}\n_3, \,
\a_3=\n_2,\, 
 \a_4=2\n_3,\,
 \a_5=\n_3.
 \end{alignat*}
Note that  $\a_0$ is the polar vector of $3Q^3\bar{3}$ which has fixed point lying in the  geodesic spanned by $x_1, x_2.$  It is a simple matter to check that the following side pairing transformations
\begin{align*}
g_1=3Q^3\bar{3}:&\quad x_0\longmapsto x_0,\quad x_1\longmapsto x_2,\\
g_2=(12)^2:&\quad x_3\longmapsto x_3,\quad x_2\longmapsto x_4,\\
g_2=(13)^2:&\quad x_5\longmapsto x_5,\quad x_4\longmapsto x_1.
\end{align*}
We get the $\C$-Fuchsian group fixing $L_1$ has the presentation
\[
\langle g_1, g_2, g_3: g_1^2, g_2^{|\frac{2p}{p-4}|}, g_3^{|\frac{2p}{p-4}|}, (g_2\circ g_1^{-1}\circ g_3)^{|\frac{2p}{p-4}|}\rangle.
\]

We claim that there exist $\C$-Fuchsian subgroups fixing complex geodesic $L_3$ which are obviously not conjugate to the ones stated above.  
We consider the three pyramids with quadrilateral bases and make each of them have the base in $L_3:$
\begin{align*}
&[4]\, 3; 1, 2\\
&[4]\, 23\bar{2};1, 3\stackrel{\bar{2}}{\longrightarrow}
[4] \, 3; \bar{2}12, 32\bar{3}\\
&[4]\, 2;23\bar{2}, 2\bar{3}\bar{2}123\bar{2}\stackrel{23}{\longrightarrow}
[4]\, 3; 2, 31\bar{3}.
\end{align*}
We glue the three quadrilaterals lying in $L_3$ and get an octagon (see Figure 6) with vertices $y_j={\bf{P}}(\n_3\boxtimes \b_j)$, where
\begin{figure}[htp]\label{fig:F.D.E2}
\includegraphics[height=2.5in]{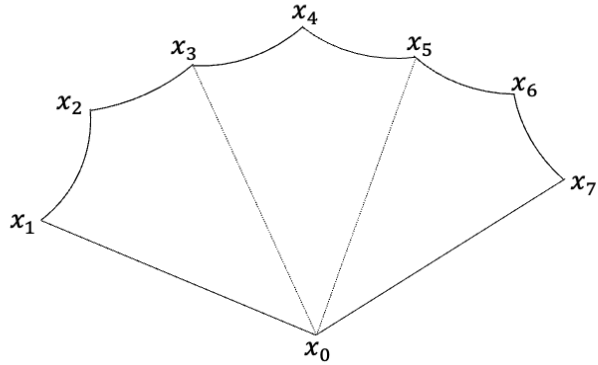}
\caption{The octagon $F$}
\end{figure}
\begin{alignat*}{4}
&\b_0=\n_2, \,
\b_1=23\n_1, \,
\b_2=\bar{1}\bar{3}\n_2,\, 
 \b_3=\n_1,\\
 &\b_4=1\n_2,
 \b_5=\bar{2}\n_1,\,
 \b_6=\bar{2}1\n_3,\,
 \b_7=\bar{2}\bar{3}\bar{2}\n_1.
 \end{alignat*}
 Let the sides denote by
\begin{equation}\label{sig4-s}
l_{i}=\left\{
             \begin{array}{ll}
             {\bf{P}}({\rm{Span}}_\C\{x_{i-1}, x_i\})\cap L_3, &\quad i=1, 2, \cdots 7.\\
             \\
             {\bf{P}}({\rm{Span}}_\C\{x_{7}, x_0\})\cap L_3,  &\quad i=8.  
             \end{array}
\right.
\end{equation}
One can check that the side pairing transformations
\[
h_1=(23)^3,\quad h_2=(13)^2,\quad h_3=(\bar{2}123)^2,\quad h_4=(231\bar{3}\bar{2}3)^2\circ h_1
\]
satisfy
$$h_1(l_8)=l_1,\quad h_2(l_3)=l_4,\quad h_3(l_5)=l_6,\quad h_4(l_7)=l_2.$$
In particular, we give an explanation for $h_4(l_7)=l_2.$ Note that $131\n_3=\n_3$ and 
$231\bar{3}\n_2=3\bar{1}\bar{3}\n_2$ hold due to $1313\bar{1}\bar{3}\bar{1}=3$ and ${\rm{br}}_3(1, 23\bar{2}).$ Then we have 
\begin{align*}
(231\bar{3}\bar{2}3)^2(23)^3(\bar{2}1\n_3)&=3^2(231\bar{3}\bar{2}3231\bar{3}\bar{2})(23232)(\bar{2}1\n_3)\\
&=3^2231(\bar{3}\bar{2}323)1231\n_3\\
&=3^223(1212)31\n_3\\
&=3^223212(131\n_3)\\
&=3^3231\bar{3}\n_2\\
&=3^33\bar{1}\bar{3}\n_2\\
&=3^4\bar{1}\bar{3}\n_2,
\end{align*}
which indicates that $h_4(y_6)=y_2.$ One can check the elliptic cycle at $y_4$ satisfies 
$h_2\circ h_4\circ h_3=3^6(\bar{3}1\bar{2}\bar{1}),$ then obtain the presentation of $\C$-Fuchsian group fixing $L_3:$ 
\[
\langle h_1, h_2, h_3: h_1^{|\frac{2p}{p-6}|},\,h_1^{|\frac{2p}{p-4}|}, h_3^{|\frac{2p}{p-4}|},\, (h_4\circ h_1^{-1})^{|\frac{2p}{p-4}|}, (h_2\circ h_4\circ h_3)^{|\frac{p}{p-3}|} \rangle.
\]
%\noindent \emph{
%Remark 1.} For Thompson group $\textbf{E}_2$, we also could get lattices in complex geodesic $L_3$ which are not conjugate to the $\C$-Fuchsian subgroups stated in Theorem \ref{thm:thom2}. Let $A=(3\bar{2}\bar{3}132)^2$, $B=(3\bar{2}32)^3$, $C=(3\bar{2}12)^2$ and $D=(3\bar{2}\bar{3}123\bar{2}\bar{1}32)^3\circ A\circ C$. Then the explicit representations are as follows.\\
%(\romannumeral1) $p=3$ $$\langle A, B, C, D: A^6, B^2, C^6\rangle,$$
%where $A, B, C$ are complex reflections in a point respectively, 
%(just have one fixed point in $\HC$)
%$D$ is a loxodromic element.\\
%(\romannumeral2) $p=4$ $$\langle A, B, C, D: B^4\rangle,$$
%where $A$ and $C$ are unipotent parabolics, $B$ is a complex reflection in a point, 
%$D$ is a loxodromic element.\\
%(\romannumeral3) $p=6$ $$\langle A, B, C, D: A^6, C^6\rangle,$$\\
%where $A$ and $C$ are complex reflections in a complex line respectively, $B$ is a unipotent parabolic, $D$ is a loxodromic element.
%Note that each $\C$-Fuchsian subgroup above fixing $L_3$ is not conjugate to any one of the $\C$-Fuchsian subgroups in Theorem \ref{thm:thom2}.

The Fuchsian groups we investigate above are subgroups of non-arithmetic lattices acting on the complex hyperbolic plane. One can immediately get that: if $\Gamma$ is a lattice and $A\in \Gamma$ is a complex reflection fixing a complex geodesic $L_A$, then {\bf Stab}$_\Gamma(L_A)$ intersects {\bf Stab}$_{{\SU}(2, 1}(L_A)$ in a lattice. Then one can easily get the following proposition.

\begin{prop}\label{thm:lat}
There exist lattices in {\bf Stab}$_{\SU(2, 1)}(L_1)$, which could be embedded in $\SU(1, 1)$. They are subgroups of the complex hyperbolic triangle groups, which we considered in Theorem \ref{thm:main}. %Theorem \ref{thm:thom1} and Theorem \ref{thm:thom2}.
\end{prop}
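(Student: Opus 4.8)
The plan is to deduce the proposition almost immediately from the work carried out in Theorem \ref{thm:main}, together with the standard structural fact about stabilisers of complex geodesics quoted just before the statement. First I would recall that for a complex geodesic $L_1$ with polar vector $\n_1$, the full stabiliser $\mathbf{Stab}_{\SU(2,1)}(L_1)$ decomposes: an element of $\SU(2,1)$ preserving $L_1$ either acts on $L_1\cong\mathbf{H}^1_{\mathbb C}$ as an isometry while fixing $\n_1$ up to a root of unity, or it involves the complex reflection $R_1$ in $L_1$. Concretely, $\mathbf{Stab}_{\SU(2,1)}(L_1)$ surjects onto $\mathbf{Isom}(L_1)\cong\mathrm{PSU}(1,1)$ with kernel the finite cyclic group generated (in $\SU(2,1)$) by $R_1$ (of order dividing $3p$). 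So to produce a lattice in $\mathbf{Stab}_{\SU(2,1)}(L_1)$ that embeds in $\SU(1,1)$, it suffices to produce a lattice in $\mathbf{Isom}(L_1)$ sitting inside $\mathbf{Stab}_\Gamma(L_1)$ for one of the triangle lattices $\Gamma=\mathcal S(p,\tau)$ considered there, since $\SU(1,1)$ is precisely the relevant double cover of $\mathrm{PSU}(1,1)=\mathbf{Isom}(L_1)$.

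The key step is then to invoke Theorem \ref{thm:main}: for each $(\tau,p)$ in the list it exhibits an explicit $\mathbb C$-Fuchsian subgroup $\Gamma_0\le\Gamma$ fixing $L_1$, together with a fundamental polygon $F$ lying in $L_1$ whose side-pairing transformations generate $\Gamma_0$, verified via Poincar\'e's polygon theorem (Theorem \ref{thm:tool}). In particular $F$ has finite hyperbolic area (it is a polygon in $\mathbf{H}^1_{\mathbb C}$, possibly with ideal vertices), so the group $\overline{\Gamma_0}$ of isometries of $L_1$ induced by $\Gamma_0$ is a lattice in $\mathbf{Isom}(L_1)$ — this is exactly the content of $F$ being a fundamental polygon. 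Pulling back $\overline{\Gamma_0}$ under the covering $\SU(1,1)\to\mathrm{PSU}(1,1)$, or equivalently restricting $\Gamma_0$ modulo the finite central subgroup generated by $R_1$, yields the desired lattice inside $\mathbf{Stab}_{\SU(2,1)}(L_1)$; and since $\overline{\Gamma_0}$ lands in $\mathrm{PSU}(1,1)$, the corresponding group is, up to the obvious finite extension/quotient, a subgroup of $\SU(1,1)$ acting on the unit disk $L_1$. Because $\Gamma_0$ is by construction a subgroup of the triangle group $\mathcal S(p,\tau)$, the resulting lattices are subgroups of the complex hyperbolic triangle groups, as claimed. Finally one records that these $\Gamma$ are the non-arithmetic lattices of \cite{DPP2}, so the proposition also gives lattices arising as genuine $\mathbb C$-Fuchsian subgroups of non-arithmetic lattices.

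I do not expect a serious obstacle here: the real work is already done in Theorem \ref{thm:main}, and the only thing to be careful about is the bookkeeping between $\mathbf{Stab}_{\SU(2,1)}(L_1)$, $\mathbf{Isom}(L_1)=\mathrm{PSU}(1,1)$, and $\SU(1,1)$ — i.e.\ making precise that "fixing $L_1$" for an $\SU(2,1)$-element means fixing $\n_1$ up to a root of unity, that the induced action on $L_1$ is by $\mathrm{PSU}(1,1)$, and that modding out by (or lifting across) the finite cyclic group $\langle R_1\rangle$ does not affect the lattice property, since passing to a finite-index subgroup or a finite central extension preserves being a lattice (finite covolume). One might also remark that in the parabolic cases ($p=6$ for $\tau_1$, etc.) the polygon $F$ has cusps, so the resulting lattices are non-uniform, but they are lattices all the same.
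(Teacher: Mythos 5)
Your proposal is correct in substance and ends up in the same place as the paper, but it is worth noting that the paper's own justification is much terser and routes through a slightly different observation: immediately before the proposition the paper records the general fact that if $\Gamma$ is a lattice and $A\in\Gamma$ is a complex reflection with mirror $L_A$, then $\mathbf{Stab}_\Gamma(L_A)$ meets $\mathbf{Stab}_{\SU(2,1)}(L_A)$ in a lattice, and then simply points to the explicit $\C$-Fuchsian subgroups of Theorem \ref{thm:main}. You instead extract the finite-covolume statement directly from the explicit fundamental polygons $F\subset L_1$ constructed in Theorem \ref{thm:main}: a finite-sided polygon (possibly with ideal vertices in the cusped cases) has finite hyperbolic area, so by Poincar\'e's theorem the induced group of isometries of $L_1$ is a lattice in $\mathrm{Isom}(L_1)$, and the bookkeeping with $\SU(1,1)$ and $\mathbf{Stab}_{\SU(2,1)}(L_1)$ finishes the argument. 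Your route is more self-contained and identifies the lattice concretely as (the image of) $\Gamma_0$; the paper's is shorter but leans on the quoted general stabiliser fact, which is essentially Stover's theorem. Both are valid, and in both cases the real content is Theorem \ref{thm:main}.

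One local misstatement should be repaired, though it does not damage the argument: the kernel of $\mathbf{Stab}_{\SU(2,1)}(L_1)\to\mathrm{Isom}(L_1)\cong\mathrm{PU}(1,1)$ is not the finite cyclic group generated by $R_1$. Elements of $\SU(2,1)$ acting trivially on $L_1$ form a compact one-dimensional group (complex reflections in $L_1$ of arbitrary angle, suitably normalised to determinant one); the finite cyclic kernel $\langle R_1\rangle$ only appears after intersecting with the discrete group $\Gamma$. Since all you need is that the kernel is compact (so that lattices correspond under the projection) and that the finite central covering $\SU(1,1)\to\mathrm{PSU}(1,1)$ preserves finiteness of covolume, your conclusion stands once this sentence is rephrased.
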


\centerline{\bf{Acknowledgements}}
\noindent
The author would like to thank Martin Deraux for drawing her attention to the topic of complex hyperbolic lattices and several valuable discussions. The author is also grateful to Ioannis Platis, Toshiyuki Sugawa for providing helpful comments and suggestions.  The author wishes to express her thanks to the referees for their constructive comments which substantially helped improving this paper.
%This work is supported by China Postdoctoral Science Foundation No. 2020M672036.
 
\renewcommand \refname{\bf{References}}
  \bibliographystyle{abbrv}
\bibliography{sun-bib.bib}
  \end{document}